\newtheorem{teo}{Theorem}[section]
\newtheorem{lema}[teo]{Lemma}
\newtheorem{cor}[teo]{Corollary}
\newtheorem{prop}[teo]{Proposition}
\theoremstyle{definition}
\newtheorem{defi}[teo]{Definition}
\newtheorem{exem}[teo]{Example}
\newtheorem{obs}[teo]{Remark}
\newtheorem{conj}[teo]{Conjecture}
\def\blfootnote{\xdef\@thefnmark{}\@footnotetext} 
\date{}
\title{\Large \textbf{HALF-ISOMORPHISMS OF AUTOMORPHIC LOOPS}}
\begin{document}

\maketitle

\vspace{-1.6cm}
\begin{center}
$\begin{array}{cc}
\textrm{MARIA DE LOURDES MERLINI GIULIANI} & \textrm{GILIARD SOUZA DOS ANJOS}\\[0.2cm]
\textrm{Centro de Matem\'atica, Computa\c c\~ao e Cogni\c c\~ao} & \textrm{Instituto de Matem\'atica e Estat\'istica}\\
\textrm{Universidade Federal do ABC} & \textrm{Universidade de S\~ao Paulo}\\
\textrm{Av. dos Estados, 5001, 09210-580} & \textrm{Rua do Mat\~ao, 1010, 05508-090}\\
\textrm{Santo Andr\'e - SP, Brazil} & \textrm{S\~ao Paulo - SP, Brazil}\\
\textrm{maria.giuliani@ufabc.edu.br}&\textrm{giliard.anjos@unesp.br}
\end{array}$
\end{center}

\begin{abstract} 
\noindent{}Automorphic loops are loops in which all inner mappings are automorphisms. This variety of loops includes groups and commutative Moufang loops. 
A half-isomorphism $f : G \longrightarrow K$ between multiplicative systems $G$ and $K$ 
is a bijection from $G$ onto $K$ such that $f(ab)\in\{f(a)f(b), f(b)f(a)\}$ for any $a,b\in G$. A half-isomorphism is trivial when it is either an isomorphism or an anti-isomorphism. Consider the class of automorphic loops such that the equation $x\cdot(x\cdot y) = (y\cdot x)\cdot x $ is equivalent to $x\cdot y = y\cdot x$. Here we show that this class of loops includes automorphic loops of odd order and uniquely $2$-divisible. Furthermore, we prove that every half-isomorphism between loops in that class is trivial.

\end{abstract}

\noindent{}{\it Keywords}: half-isomorphism, half-automorphism, automorphic loop.
\\
{\it Mathematics Subject Classification:}  20N05.

\section{Introduction}

A loop $( L,.) $ is a set $L$ with a binary operation $ (\cdot) $ such that for each $ a, b \in L $ there exist unique elements $x, y \in L$ such that $ a \cdot x \, = \, b $ and $ y \cdot a \,= \, b$, and there is an identity element $ 1 $ such that 
$ 1 \cdot x = x = x \cdot  1$ for every $ x \in L $. The operation $ \cdot$ is denoted by juxtaposition for the sake of convenience, so $(L, \cdot)$ is denoted by $L$ and $x \cdot y$ by $xy$.

For any $a \in L$, the \textit{right translation} and \textit{left translation} by $a$ are the bijections $R_a: L \rightarrow L $; $ x \mapsto x \cdot a $ and $L_a: L \rightarrow L$; $x \mapsto a \cdot x$ respectively.
These bijections generate the \textit{multiplication group} $ Mlt(L)= \left\langle R_a,L_a ; a \in L \right\rangle $. The \textit{inner mapping group},  denoted by $Inn(L)$,  is the subgroup of  $ Mlt(L)$ containing the elements that stabilize the identity element. 
Automorphic loops (or $A$-loops in  Bruck's notation \cite{BP56}) are loops in which every inner mapping is an automorphism, or $Inn(L)$ is a subgroup of $Aut(L)$, where $Aut(L)$ is the automorphism group of $L$. It is well known that automorphic loops are power associative, i.e., each element generates a cyclic group. \\

Let $(L,*)$ and $(L',\cdot)$ be loops. A bijection $f : L \longrightarrow L'$ is called a \emph{half-isomorphism} if 
\begin{center}
$f(x*y)\in\{f(x)\cdot f(y),f(y)\cdot f(x)\}$, for any $ x, y \in L$.  
\end{center}
A \emph{half-automorphism} and a \emph{half-homomorphism} are defined in a similar manner.
We say that a half-isomorphism is \emph{trivial} when it is either an isomorphism or an anti-isomorphism. Likewise, a half-isomorphism is \textit{nontrivial} when it is neither an isomorphism nor an anti-isomorphism (In \cite{KSV}, the word ``proper'' is used instead).\\

All half-isomorphisms are trivial for groups, as was shown by Scott \cite{Sco57}, but not for loops. For example, in the case of the variety of Moufang loops, there exist loops of even order with nontrivial half-isomorphism \cite{GG13,GPS17} as well as loops of odd order in which all half-isomorphisms are trivial \cite{GG1}. 

These and more recent studies of half-isomorphisms in different classes of loops have shown that the behavior of these functions is rather unpredictable. For example, investigating dihedral automorphic loops, which is a class of automorphic loops of even order, we proved that there exist half-isomorphisms which are neither isomorphisms nor anti-isomorphisms \cite{GA19}. Another case is the case of the automorphic loops of odd order of the class called Lie automorphic loops for which we proved that all half-automorphisms are trivial \cite{GA192}. 

In the current paper, we prove that if $Q$ and $Q'$ are automorphic loops, and in $Q'$ the equation $x\cdot(x\cdot y) = (y\cdot x)\cdot x $ is equivalent to the equation $x\cdot y = y\cdot x$, then every half-isomorphism from $Q$ onto $Q'$ is trivial. The conjecture, presented in \cite{GA192}, that every half-isomorphism between automorphic loops of odd order is trivial, is a corollary of this result.

\section{Preliminaries}
\label{sec2}

Part of the investigations for this work (section 3) were assisted by Prover9 \cite{mace}, an automated computational tool for equational reasoning. However, the raw output is usually over-complicated and here we present only "humanized" versions on these proofs. Also, the LOOPS package \cite{NV1} for GAP \cite{gap} was used.

In this section, the required definitions and basic results are stated. For general facts about loop theory we recommend \cite{B71,P90}. 

Let $L$ be a loop and $x,y\in L$. Consider the following bijections:

\begin{eqnarray}
R_{(x,y)} = R_xR_yR^{-1}_{(xy)},\\
L_{(x,y)} = L_xL_yL^{-1}_{(yx)},\\
T_x = R_xL^{-1}_x.
\end{eqnarray}

It is well-known that the inner mapping group of $L$ is generated by these bijections, that is:

\begin{center}
$Inn(L) = \langle R_{(x,y)},L_{(x,y)}, T_x \,\,|\,\, x,y\in L \rangle$.
\end{center}

The following lemma is straightforward.

\begin{lema} 
\label{lema20}
Let $L$ be a loop, $x,y,z\in L$ and $\varphi \in Aut(L)$. Then

(a) $(y)L^{-1}_x\varphi = (y)\varphi L^{-1}_{(x)\varphi}$,

(b) $(y)R^{-1}_x\varphi = (y)\varphi R^{-1}_{(x)\varphi}$,

(c) $x (y)L^{-1}_x = y$,

(d) $(y)R^{-1}_x x = y$,

(e) $xy=yx$ if and only if $(y)T_x = y$.
\end{lema}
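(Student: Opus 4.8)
The plan is to treat each item as a direct consequence of the definitions together with the multiplicativity of $\varphi$, since nothing beyond cancellation and the automorphism property is needed. The organizing observation I would record first is that an automorphism conjugates each translation into the translation by the image point: for every $y\in L$ one has $(y)L_x\varphi = (xy)\varphi = (x)\varphi\,(y)\varphi = (y)\varphi L_{(x)\varphi}$, so that $L_x\varphi = \varphi L_{(x)\varphi}$, and symmetrically $(y)R_x\varphi = (yx)\varphi = (y)\varphi\,(x)\varphi = (y)\varphi R_{(x)\varphi}$, giving $R_x\varphi = \varphi R_{(x)\varphi}$. Throughout I read composition left to right, in keeping with the $(y)f$ convention fixed by the paper. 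These two identities say precisely that $\varphi$ respects the loop operation, and the rest is rearrangement.

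For (a) and (b) I would simply rewrite those conjugation identities in terms of the inverse translations. From $L_x\varphi = \varphi L_{(x)\varphi}$, composing on the left by $L^{-1}_x$ and on the right by $L^{-1}_{(x)\varphi}$ yields $\varphi L^{-1}_{(x)\varphi} = L^{-1}_x\varphi$, which is exactly (a); part (b) is the identical manipulation applied to $R_x\varphi = \varphi R_{(x)\varphi}$. A more transparent alternative for (a) is to set $z = (y)L^{-1}_x$, so that $xz = y$, apply $\varphi$ to obtain $(x)\varphi\,(z)\varphi = (y)\varphi$, and read off $(z)\varphi = (y)\varphi L^{-1}_{(x)\varphi}$; since $(z)\varphi = (y)L^{-1}_x\varphi$, this is the claim, and (b) is dual.

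Items (c) and (d) are immediate from the fact that $L_x$ and $R_x$ are bijections: $x\,(y)L^{-1}_x = \big((y)L^{-1}_x\big)L_x = (y)\big(L^{-1}_xL_x\big) = y$, and dually $(y)R^{-1}_x\,x = \big((y)R^{-1}_x\big)R_x = y$. For (e) I would unfold $T_x = R_xL^{-1}_x$ to get $(y)T_x = (yx)L^{-1}_x$; then $(y)T_x = y$ holds if and only if $(yx)L^{-1}_x = y$, and applying the bijection $L_x$ to both sides shows this is equivalent to $yx = xy$, i.e.\ to $xy = yx$.

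I do not anticipate a genuine obstacle, as the lemma is purely computational; the label ``straightforward'' is accurate. The only points needing care are bookkeeping: respecting the left-to-right composition convention so that the inversions in (a) and (b) land on the correct side, and invoking cancellation (uniqueness of solutions to $a\cdot x = b$ and $y\cdot a = b$) explicitly where it is used, rather than silently. Once the two conjugation identities of the first paragraph are established, every remaining item follows mechanically.
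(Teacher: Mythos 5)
Your proof is correct; the paper itself omits the argument, labelling the lemma ``straightforward,'' and your verification via the conjugation identities $L_x\varphi=\varphi L_{(x)\varphi}$, $R_x\varphi=\varphi R_{(x)\varphi}$ together with cancellation is exactly the routine computation being left to the reader. The composition conventions and the direction of the inverses are all handled correctly.
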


Let $S$ be a non empty subset of $L$ and $K = \{H\leq L\,\,|\,\, S \subset H\}$. The \emph{subloop of $L$ generated by $S$}, denoted by $\langle S \rangle$, is the following subloop of $L$:

\begin{center}
$\langle S \rangle = \bigcap_{H\in K} H$.
\end{center}

\begin{prop}
\label{prop21} (\cite[Thereom $2.4$]{BP56}) Let $Q$ be an automorphic loop. Then

(a) Every maximal commutative subset of $Q$ is a subloop of $Q$.

(b) Every maximal associative subset of $Q$ is a subgroup of $Q$.

(c) $Q$ is power associative, that is, $\langle x \rangle$ is associative, for every $x\in Q$.
\end{prop}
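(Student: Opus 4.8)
The plan is to reduce both commuting and associating to \emph{fixed-point conditions of inner mappings} and then exploit one elementary principle: if $\varphi\in Aut(Q)$ then its fixed-point set $Fix(\varphi)=\{z\in Q:(z)\varphi=z\}$ is a subloop of $Q$. Indeed $1\in Fix(\varphi)$, and since $\varphi$ is a bijective homomorphism it preserves products and the unique solutions of $az=b$ and $za=b$, so $Fix(\varphi)$ is closed under multiplication and under both divisions. Because $Q$ is automorphic, every inner mapping is such a $\varphi$. For (a), let $C$ be a maximal commutative subset and put $Comm(c)=\{z\in Q:zc=cz\}$ for $c\in C$. By Lemma~\ref{lema20}(e), $Comm(c)=Fix(T_c)$, which is a subloop since $T_c\in Inn(Q)\le Aut(Q)$. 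As $C$ is commutative, $C\subseteq\bigcap_{c\in C}Comm(c)$; conversely any $w$ in that intersection commutes with every element of $C$, so $C\cup\{w\}$ is commutative and maximality forces $w\in C$. Hence $C=\bigcap_{c\in C}Fix(T_c)$ is an intersection of subloops, thus a subloop.

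For (c), note that $(x)T_x=x$ by Lemma~\ref{lema20}(e), so $x\in Fix(T_x)$ and therefore $\langle x\rangle\subseteq Fix(T_x)$; that is, every power of $x$ commutes with $x$. Using this, a short computation with Lemma~\ref{lema20} shows that each inner mapping $R_{(x^{n},x)}$ fixes $x$: both $(x\cdot x^{n})\cdot x$ and $x\cdot(x^{n}\cdot x)$ collapse to $x^{n+2}$ once powers are known to commute with $x$. Thus $R_{(x^{n},x)}$ fixes the subloop $\langle x\rangle$ pointwise, giving $(z\cdot x^{n})\cdot x=z\cdot(x^{n}\cdot x)$ for every $z\in\langle x\rangle$. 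Taking $z=x^{m}$ supplies the inductive step $x^{m}x^{n+1}=(x^{m}x^{n})x=x^{m+n+1}$, so $x^{m}x^{n}=x^{m+n}$ for all $m,n$ and $\langle x\rangle$ is associative.

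For (b), I mirror (a) with the associator replacing the commutator. The three associativity patterns relative to a pair $(a,b)$ are each fixed-point conditions of inner mappings: $(za)b=z(ab)$ is $(z)R_{(a,b)}=z$; the pattern $(ab)z=a(bz)$ is $(z)L_{(b,a)}=z$; and the middle pattern $(az)b=a(zb)$ is the fixed-point condition for $L_aR_bL_a^{-1}R_b^{-1}$, a translation-commutator which one checks stabilises $1$ and is therefore inner. Intersecting over all $a,b\in A$ yields a subloop $N(A)$ containing the maximal associative set $A$, and since each of these fixed-point sets is a subloop one gets at once that $A\cdot A\subseteq N(A)$. It then suffices to prove that $N(A)$ is itself an associative subset: maximality would give $A=N(A)$, a subloop on which associativity holds, i.e. a subgroup.

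I expect this last step to be the main obstacle. Because associativity is a ternary relation, the fixed-point/maximality device only controls triples in which a \emph{single} new element occurs, whereas showing $N(A)$ associative (equivalently, that $A$ is closed) forces one to handle triples with repeated occurrences of a new element, such as $(ww)a$ versus $w(wa)$. To absorb these I would use that automorphisms transport the association relation --- if $(uv)w=u(vw)$ and $\theta\in Aut(Q)$ then $((u)\theta\,(v)\theta)(w)\theta=(u)\theta((v)\theta\,(w)\theta)$ --- together with the power-associativity of (c) to deal with the repeated factors, thereby propagating associativity from $A$ to the subloop it generates. Making this propagation precise is the technical heart of the argument.
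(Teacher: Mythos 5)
The paper does not prove this proposition; it is quoted from Bruck and Paige without proof, so your attempt can only be judged on its own terms. Parts (a) and (c) are essentially correct and follow the classical line: the fixed-point set of an automorphism is a subloop, $Comm(c)=Fix(T_c)$ by Lemma~\ref{lema20}(e), and a maximal commutative subset equals $\bigcap_{c\in C}Fix(T_c)$; for (c), $\langle x\rangle\subseteq Fix(T_x)$ makes $(x)R_{(x^{n},x)}=x$ a genuine computation, whence $R_{(x^{n},x)}$ fixes $\langle x\rangle$ pointwise and $x^{m}x^{n}=x^{m+n}$ follows for positive exponents. The only loose end in (c) is the passage from positive powers to the assertion that $\langle x\rangle$ itself is associative: you still need the inverse (e.g.\ $x^{-1}\in\langle x\rangle\subseteq Fix(T_x)$ gives a two-sided inverse) and a check that the set of all integer powers is closed under division; this is automatic for finite loops but needs a sentence in general.

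Part (b), however, is not a proof, and you say so yourself. The fixed-point sets $Fix(R_{(a,b)})$, $Fix(L_{(b,a)})$ and $Fix(L_aR_bL_a^{-1}R_b^{-1})$ correctly encode the three association patterns in which the new element occurs exactly once, so $N(A)$ is a subloop containing $A$; but maximality of $A$ only yields $A=N(A)$ if every $w\in N(A)$ can be adjoined to $A$ without destroying associativity, and that requires the triples $(w,w,a)$, $(w,a,w)$, $(a,w,w)$ and $(w,w,w)$ as well. None of these is the fixed-point equation of an inner mapping you have introduced --- rewriting $(ww)a=w(wa)$ as $(w)R_{(w,a)}=w$ is circular, since $R_{(w,a)}$ is built from the very products you are trying to control --- and power-associativity of $\langle w\rangle$ says nothing about a triple mixing two copies of $w$ with an element $a\notin\langle w\rangle$. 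The proposed repair (``automorphisms transport the association relation'') is precisely the step that carries the weight of Bruck and Paige's theorem, and it is left as a sketch. As written, (b) establishes only that $A$ sits inside a subloop $N(A)$ whose elements associate with pairs from $A$, which is strictly weaker than the claim; since (a) and (c) stand on their own, the gap is confined to (b), but it is a real one.
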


If $Q$ is non-commutative (non-associative), then by Zorn's Lemma we have that every commutative (associative) subset of $Q$ is contained in at least one maximal commutative (associative) subset of $Q$. Then the next result is a direct consequence of Proposition \ref{prop21}.

\begin{cor}
\label{cor21} Let $Q$ be an automorphic loop and $S$ be a non empty subset of $Q$. If $S$ is commutative (associative), then $\langle S \rangle$ is commutative (associative).
\end{cor}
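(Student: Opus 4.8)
The plan is to treat the commutative and associative cases in parallel and, in each, to reduce to Proposition \ref{prop21} via the Zorn's Lemma remark preceding the statement. I will write out the commutative case in full; the associative case is entirely analogous, replacing ``commutative subset'' by ``associative subset'', ``subloop'' by ``subgroup'', and part (a) by part (b) of Proposition \ref{prop21}. I first dispose of the trivial case: if $Q$ is itself commutative, then $\langle S \rangle$, being a subset of $Q$, is automatically commutative and there is nothing to prove. So I may assume $Q$ is non-commutative.

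Next I would set up the Zorn's Lemma argument on the poset $\mathcal{P}$ of all commutative subsets of $Q$ that contain $S$, ordered by inclusion. This poset is non-empty, since $S \in \mathcal{P}$. The key verification is that every chain in $\mathcal{P}$ admits an upper bound: given a chain $\{C_i\}$, its union $C = \bigcup_i C_i$ contains $S$ and is commutative, because any two elements of $C$ already lie in a common $C_i$ by the chain condition and hence commute. Thus $C \in \mathcal{P}$ bounds the chain, and by Zorn's Lemma $\mathcal{P}$ has a maximal element $M$. A short argument shows $M$ is in fact a maximal commutative subset of $Q$: if $M \subseteq N$ with $N$ commutative, then $N \supseteq S$, so $N \in \mathcal{P}$, and maximality of $M$ within $\mathcal{P}$ forces $N = M$.

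Finally I would invoke Proposition \ref{prop21}(a): since $M$ is a maximal commutative subset of the automorphic loop $Q$, it is a subloop. Because $S \subseteq M$ and $\langle S \rangle$ is by definition the smallest subloop of $Q$ containing $S$, it follows that $\langle S \rangle \subseteq M$. As $M$ is commutative, so is $\langle S \rangle$, which completes the commutative case.

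The argument is short, and the only points requiring care are the upper-bound step of the Zorn argument---checking that a union of a chain of commutative (respectively associative) subsets remains commutative (respectively associative)---together with the observation that maximality within $\mathcal{P}$ coincides with maximality among all such subsets of $Q$. Neither is a genuine obstacle; the entire substance of the result is carried by Proposition \ref{prop21}, and this corollary merely repackages it for arbitrary generating sets.
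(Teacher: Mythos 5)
Your argument is correct and is exactly the route the paper intends: the remark preceding the corollary sketches the same Zorn's Lemma construction of a maximal commutative (associative) subset containing $S$, which by Proposition \ref{prop21} is a subloop (subgroup) and therefore contains $\langle S\rangle$. You have merely written out in full the details the paper leaves implicit, so there is nothing to correct.
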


\begin{prop}
\label{prop22} (\cite[Theorem $2.6$]{BP56}) Let $Q$ be an automorphic loop. Then, for all $x,y,z\in Q$ and $m,n,r\in \mathbb{Z}$:

(a) $x^m(x^ny) = x^n(x^my)$,

(b) $(yx^m)x^n = (yx^n)x^m$,

(c) $(x^my)x^n = x^m(yx^n)$,

(d) $x((yx)(zx)) = ((xy)(xz))x$.
\end{prop}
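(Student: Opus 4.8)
The plan is to translate all four identities into statements about translations by powers of $x$ and then to show that these translations generate an abelian permutation group. Power associativity (Proposition \ref{prop21}(c)) lets me treat $\langle x\rangle$ as an ordinary cyclic group, so $x^mx^n=x^nx^m=x^{m+n}$ and inverses behave as expected. Reading products on the right in the notation of Lemma \ref{lema20}, item (a) becomes $L_{x^m}L_{x^n}=L_{x^n}L_{x^m}$, item (b) becomes $R_{x^m}R_{x^n}=R_{x^n}R_{x^m}$, and item (c) becomes $L_{x^m}R_{x^n}=R_{x^n}L_{x^m}$; thus (a)--(c) together assert exactly that $\{L_{x^k},R_{x^k}\mid k\in\mathbb{Z}\}$ commutes elementwise. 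Item (d) I would derive from the $m=n=1$ case of (c) at the very end.

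The main tool is the defining hypothesis $Inn(L)\le Aut(L)$. First I would record that the composites $L_{(x^m,x^n)}=L_{x^m}L_{x^n}L_{x^{m+n}}^{-1}$ and $R_{(x^m,x^n)}=R_{x^m}R_{x^n}R_{x^{m+n}}^{-1}$ are inner, hence automorphisms, and that each fixes $x$; since an automorphism preserves powers, it then fixes $\langle x\rangle$ pointwise. Because $L_{x^m}L_{x^n}=L_{(x^m,x^n)}L_{x^{m+n}}$ and $L_{x^n}L_{x^m}=L_{(x^n,x^m)}L_{x^{m+n}}$, identity (a) is equivalent to the equality of automorphisms $L_{(x^m,x^n)}=L_{(x^n,x^m)}$, which I would establish by induction on $|m|+|n|$, reducing to small exponents and using the automorphism property to pin the maps down on products. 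Identity (b) is the mirror image of (a) and follows by applying (a) in the opposite loop $L^{\mathrm{op}}$, which is again automorphic.

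For (c) the base case $m=n=1$ is flexibility at $x$, namely $x(yx)=(xy)x$, equivalently $L_xR_x=R_xL_x$; I would prove this first and then extend to arbitrary powers by induction, feeding in (a) and (b). Granting flexibility, (d) is immediate: the map $T_x=R_xL_x^{-1}$ is inner, hence an automorphism, and flexibility gives $(xy)T_x=\big((xy)x\big)L_x^{-1}=yx$. Applying the automorphism $T_x$ to the product $(xy)(xz)$ therefore yields $\big((xy)(xz)\big)T_x=(yx)(zx)$, which unwinds to $\big((xy)(xz)\big)x=x\big((yx)(zx)\big)$, i.e.\ (d).

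The hard part will be the base commuting relations, in particular flexibility at $x$ and the first nontrivial instance of (a). The difficulty is that $Inn(L)\le Aut(L)$ only forces the relevant composites to be automorphisms fixing $\langle x\rangle$ pointwise, and an automorphism may fix a cyclic subgroup pointwise without being trivial; closing this gap seems to require the explicit structural identities relating $L_x$ and $R_x$ in an automorphic loop rather than the automorphism property in the abstract. This is precisely the content supplied by the earlier results of Bruck and Paige \cite{BP56} on which the cited statement rests.
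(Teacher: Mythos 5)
The paper offers no proof of this proposition: it is imported verbatim from Bruck--Paige \cite[Theorem 2.6]{BP56}, so there is no in-house argument to compare yours against. Judged on its own terms, your proposal is a sound reduction but not a proof. The translation of (a)--(c) into the commutation relations $L_{x^m}L_{x^n}=L_{x^n}L_{x^m}$, $R_{x^m}R_{x^n}=R_{x^n}R_{x^m}$ and $L_{x^m}R_{x^n}=R_{x^n}L_{x^m}$ is correct (it is exactly what the paper later records as Corollary~\ref{cor22}); the passage from (a) to (b) via the opposite loop is legitimate, since $Mlt(L^{\mathrm{op}})=Mlt(L)$, $Inn(L^{\mathrm{op}})=Inn(L)$ and $Aut(L^{\mathrm{op}})=Aut(L)$; and the derivation of (d) from flexibility together with $T_x\in Aut(Q)$ is clean: $(xy)T_x=\bigl((xy)x\bigr)L_x^{-1}=\bigl(x(yx)\bigr)L_x^{-1}=yx$, so applying the automorphism $T_x$ to $(xy)(xz)$ gives $\bigl((xy)(xz)\bigr)x=x\bigl((yx)(zx)\bigr)$. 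Your observation that $L_{(x^m,x^n)}$ and $R_{(x^m,x^n)}$ are automorphisms fixing $\langle x\rangle$ pointwise is also correct.

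The gap is exactly where you locate it, and it is the entire content of the theorem. That these inner mappings fix $\langle x\rangle$ pointwise does not force $L_{(x^m,x^n)}=L_{(x^n,x^m)}$ as permutations of all of $Q$, nor does it yield flexibility at $x$; an automorphism can fix a cyclic subloop pointwise while acting nontrivially elsewhere. The proposed ``induction on $|m|+|n|$, reducing to small exponents and using the automorphism property to pin the maps down on products'' has no base case and no worked inductive step, and the base cases (flexibility, and the first nontrivial commutation of $L_x$ with $L_{x^n}$) are precisely the statements you then defer back to Bruck--Paige. In \cite{BP56} these are obtained by a genuinely nontrivial chain of computations built on their preliminary lemmas about A-loops, not as a formal consequence of ``inner mappings are automorphisms'' restricted to $\langle x\rangle$. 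As written, the proposal is an honest and well-organized outline whose hard core is outsourced to the very result being proved, so it does not stand as a proof.
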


As a direct consequence of the items (a), (b) and (c) of the above Proposition, we have the following result.

\begin{cor}
\label{cor22} Let $Q$ be an automorphic loop and $x\in Q$. Then $\langle L_{x^m},R_{x^n}\,\,|\,\, m,n \in \mathbb{Z} \rangle$ is an abelian group. In particular, we have for all $m\in \mathbb{Z}$:

\begin{equation}
\label{tx1}
T^{m}_x = R^m_xL^{-m}_x = L^{-m}_xR^m_x.
\end{equation}
\end{cor}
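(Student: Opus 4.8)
The plan is to read the three identities in Proposition \ref{prop22}(a)--(c) as commutation relations among the translations $L_{x^m}$ and $R_{x^n}$, and then to invoke the elementary fact that a group generated by a family of pairwise commuting elements is abelian. So the whole statement reduces to a direct rewriting of Proposition \ref{prop22} together with one standard group-theoretic remark.

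First I would translate each identity. Writing maps on the right as in Lemma \ref{lema20}, so that $(y)L_{x^m} = x^m y$ and $(y)R_{x^n} = y x^n$, identity (a) reads $(y)L_{x^n}L_{x^m} = (y)L_{x^m}L_{x^n}$ for every $y$, i.e. $L_{x^m}L_{x^n} = L_{x^n}L_{x^m}$; identity (b) reads $(y)R_{x^m}R_{x^n} = (y)R_{x^n}R_{x^m}$, i.e. $R_{x^m}R_{x^n} = R_{x^n}R_{x^m}$; and identity (c) reads $(y)L_{x^m}R_{x^n} = (y)R_{x^n}L_{x^m}$, i.e. $L_{x^m}R_{x^n} = R_{x^n}L_{x^m}$. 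Thus any two of the generators in $\{\,L_{x^m},R_{x^n} : m,n\in\mathbb{Z}\,\}$ commute.

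Next I would observe that if two elements of a group commute then so do their inverses (conjugate the relation by either element), so the generating set together with the inverses of its members is pairwise commuting. Any two words in these letters can therefore be reordered freely, whence the subgroup $\langle L_{x^m},R_{x^n} : m,n\in\mathbb{Z}\rangle$ is abelian. This passage from ``generators commute pairwise'' to ``the generated group is abelian'' is the only step that requires a genuine (if mild) argument, namely the remark about inverses; everything else is mechanical.

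Finally, for the displayed formula I would note that $R_x = R_{x^1}$ and $L_x = L_{x^1}$ lie in this abelian group, so $T_x = R_x L^{-1}_x$ does as well, and raising to the $m$-th power in an abelian group distributes over the product. Hence $T^m_x = (R_x L^{-1}_x)^m = R^m_x L^{-m}_x = L^{-m}_x R^m_x$, where the first equality uses commutativity to split the power, the second is the definition of $L^{-m}_x$, and the last is again commutativity. I do not expect any real obstacle here: the content is entirely contained in Proposition \ref{prop22}, and the role of this corollary is simply to package those three identities into the single, reusable statement that the powers of $L_x$ and $R_x$ generate an abelian group in which $T_x$ factors.
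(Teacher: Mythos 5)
Your proposal is correct and matches the paper's intent exactly: the paper offers no written proof, merely asserting the corollary as ``a direct consequence'' of Proposition \ref{prop22}(a)--(c), and your argument --- reading those identities as the commutation relations $L_{x^m}L_{x^n}=L_{x^n}L_{x^m}$, $R_{x^m}R_{x^n}=R_{x^n}R_{x^m}$, $L_{x^m}R_{x^n}=R_{x^n}L_{x^m}$ and then passing from pairwise commuting generators to an abelian group --- is precisely the routine verification being left to the reader. The derivation of $T^m_x=R^m_xL^{-m}_x=L^{-m}_xR^m_x$ from $T_x=R_xL^{-1}_x$ inside this abelian group is likewise exactly what is intended.
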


\begin{obs} We will omit the parenthesis when occur one of the items (a), (b) and (c) of  Proposition \ref{prop22}. For example, we will write only $x^{-1}yx$ instead of $(x^{-1}y)x$ or $x^{-1}(yx)$.
\end{obs}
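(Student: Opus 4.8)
The content to establish here is not really a theorem but the consistency of a notational convention: for the three patterns singled out in Proposition \ref{prop22}, an unparenthesized product of a single element $y$ with powers of a single element $x$ should denote a well-defined element of $L$, independent of how it is associated. The plan is therefore simply to check that each such pattern admits only parenthesizations that Proposition \ref{prop22} already equates, so that dropping the brackets introduces no ambiguity. There is nothing to prove beyond pointing to the relevant identities.

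First I would treat the worked example. The string $x^{-1}yx$ has exactly two legal parenthesizations, namely $(x^{-1}y)x$ and $x^{-1}(yx)$. Specializing item (c) of Proposition \ref{prop22} with $m=-1$ and $n=1$ gives $(x^{-1}y)x = x^{-1}(yx)$, so the two readings coincide and the notation $x^{-1}yx$ is unambiguous. The same argument covers the general form $x^m y x^n$ via item (c); the form in which two powers flank $y$ on the left, where item (a) equates $x^m(x^ny)$ with $x^n(x^my)$; and the symmetric form with two powers on the right, handled by item (b).

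If one wished to license longer strings such as $x^{a}x^{b}\,y\,x^{c}x^{d}$, the additional step would be to first collapse equal-side powers using the power associativity of $\langle x\rangle$ from Proposition \ref{prop21}(c), writing $x^ax^b=x^{a+b}$ and $x^cx^d=x^{c+d}$, and then apply item (c) to the resulting $x^{m}yx^{n}$. In this sense the convention rests on Proposition \ref{prop22} together with power associativity. I do not anticipate any genuine obstacle: the only thing to watch is the bookkeeping of which identity applies to which pattern, since none of (a), (b), (c) individually handles powers appearing simultaneously on both sides of $y$, and that case is exactly the one resolved by combining (c) with power associativity.
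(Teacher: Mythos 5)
Your core point is right and is exactly the (unstated) justification behind the paper's remark: this is a notational convention, the paper offers no proof, and the only thing to check is that Proposition~\ref{prop22}(c) with $m=-1$, $n=1$ equates the two legal readings $(x^{-1}y)x$ and $x^{-1}(yx)$ of the string $x^{-1}yx$, and more generally the two readings of $x^m y x^n$. That matches the paper's actual practice, where the dropped parentheses always occur in this one-power-on-each-side shape (e.g.\ $x*y*x^{-1}$, $f(x)\cdot f(y)\cdot f(x)^{-1}$). Note also a small internal inconsistency: item (c) \emph{is} precisely the case of powers on both sides of $y$, so your closing claim that ``none of (a), (b), (c) individually handles powers appearing simultaneously on both sides of $y$'' contradicts your own earlier, correct, sentence.

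The genuine error is in what you claim for items (a), (b) and for longer strings. Items (a) and (b) are commutation laws, not re-association laws: (a) equates the two \emph{distinct} strings $x^m(x^ny)$ and $x^n(x^my)$, which carry the same parenthesization pattern; it does not equate $x^m(x^ny)$ with $(x^mx^n)y = x^{m+n}y$, and that equality fails in a general automorphic loop. Automorphic loops need not satisfy the left or right inverse property --- this is why the paper carefully distinguishes $(y)L^{-1}_x$ from $x^{-1}y$ (Lemma~\ref{lema20}(c), Corollary~\ref{corprop23}), and why identity \eqref{eql31d} reads $(yx)x^{-1} = x^{-1}(xy)$ rather than $(yx)x^{-1}=y$. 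Concretely, with $m=-1$, $n=1$ the string $x^{-1}xy$ has the readings $(x^{-1}x)y = y$ and $x^{-1}(xy)$, which differ in general, so omitting parentheses in a two-powers-on-one-side pattern is \emph{not} licensed, and items (a), (b) by themselves justify no parenthesis-dropping at all. For the same reason your proposed extension to $x^a x^b\, y\, x^c x^d$ --- collapse $x^ax^b$ to $x^{a+b}$ by power associativity and then apply (c) --- does not establish unambiguity: it merely selects one preferred reading, while the inequivalent reading $x^a(x^b y)\cdots$ remains legal. The paper never writes such strings; its convention, as used, is confined to the shape covered by (c), and where (a) or (b) is invoked the parentheses are retained and the identity is applied as an explicit rewriting step.
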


\begin{prop}
\label{prop23} (\cite[Theorem $7.5$]{JKNV11}) Every automorphic loop has the antiautomorphic inverse property (AAIP), that is, the following identity holds:

\begin{equation}
\label{aaip}
(xy)^{-1} = y^{-1}x^{-1} \tag{AAIP}
\end{equation}
\end{prop}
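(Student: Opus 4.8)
The plan is to reduce the antiautomorphic inverse property to a single cancellation and then exploit, in an essential way, that the inversion map commutes with every inner mapping. First I would record the tools inherited from the earlier results: an automorphic loop $Q$ has two-sided inverses and is power associative (Proposition \ref{prop21}(c)); it is flexible, since the case $m=n=1$ of Proposition \ref{prop22}(c) gives $(xy)x=x(yx)$, equivalently $L_xR_x=R_xL_x$; and several concrete inner mappings are automorphisms. Besides $T_x=R_xL_x^{-1}$, the mixed mappings $L_{(x^{-1},x)}=L_{x^{-1}}L_x$, $R_{(x^{-1},x)}=R_{x^{-1}}R_x$ and $R_{(x,x^{-1})}=R_xR_{x^{-1}}$ all lie in $Inn(Q)\le Aut(Q)$: each is obtained by specialising the defining formulas for $L_{(x,y)}$ and $R_{(x,y)}$ with one factor equal to $x^{-1}$ (so the trailing translation collapses to $L_1=R_1=id$), and each visibly fixes $1$. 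Finally, since an automorphism restricts to a group isomorphism on each cyclic subgroup $\langle a\rangle$, every $\varphi\in Aut(Q)$ satisfies $(a^{-1})\varphi=((a)\varphi)^{-1}$; that is, $\varphi$ commutes with the inversion map $J:a\mapsto a^{-1}$.

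For the reduction: since $Q$ is a loop, the equation $(xy)\cdot w=1$ has a unique solution, and $w=(xy)^{-1}$ solves it, so it suffices to prove $(xy)(y^{-1}x^{-1})=1$. The engine of the argument is the identity $(xc)T_x=cx$, valid for all $c$: indeed $(xc)T_x=x\backslash((xc)x)=x\backslash(x(cx))=cx$ by flexibility and Lemma \ref{lema20}(c). Applying $J$ and using that the automorphism $T_x$ commutes with $J$ gives $(xc)^{-1}=(cx)^{-1}T_x^{-1}$, which already relates $(xy)^{-1}$ and $(yx)^{-1}$. To upgrade such relations to the two-sided statement $y^{-1}x^{-1}$, I would push the same $J$-commutation through the mixed automorphisms above: applying, say, $\varphi=R_xR_{x^{-1}}$ to $a^{-1}$ and using $(a^{-1})\varphi=((a)\varphi)^{-1}$ yields identities such as $(a^{-1}x)x^{-1}=((ax)x^{-1})^{-1}$, which trade a one-sided translate of $a^{-1}$ for the inverse of a one-sided translate of $a$. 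Chaining these, together with the genuinely automorphic identity Proposition \ref{prop22}(d) (to move an inner factor of $x$ past a product), is what I expect to isolate $(xy)^{-1}$ in the form $y^{-1}x^{-1}$.

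The main obstacle is exactly that automorphic loops need not enjoy any one-sided inverse property: one cannot simply cancel $y^{-1}x^{-1}$ against $xy$, and the symmetric relation coming from $T_x$ alone only identifies $(xy)^{-1}$ with a transform of $(yx)^{-1}$, an object of the same difficulty. Breaking this symmetry is the crux, and it is precisely where the full automorphic hypothesis must enter, beyond flexibility and power associativity, through Proposition \ref{prop22}(d) and the automorphy of the mixed inner mappings. Carrying this bookkeeping through without an inverse property is delicate; it is the nontrivial content established in \cite[Theorem $7.5$]{JKNV11}, which is why we quote the result here rather than reprove it.
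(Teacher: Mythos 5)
The paper gives no proof of this proposition --- it is imported directly from \cite[Theorem $7.5$]{JKNV11} --- and your proposal, after an exploratory sketch, ends by deferring to that same citation, so the two treatments coincide. For the record, the individual observations in your sketch all check out (the specializations $R_{(x,x^{-1})}=R_xR_{x^{-1}}$, $R_{(x^{-1},x)}=R_{x^{-1}}R_x$ and $L_{(x^{-1},x)}=L_{x^{-1}}L_x$ are inner mappings and hence automorphisms; every automorphism commutes with inversion by power associativity; $(xc)T_x=cx$ follows from flexibility), but the decisive ``chaining'' that would actually isolate $(xy)^{-1}$ as $y^{-1}x^{-1}$ is never carried out, so the citation is doing all the work --- exactly as it does in the paper, which treats (AAIP) as a black-box prerequisite rather than something to be reproved.
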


The next result is a consequence of the (AAIP).

\begin{cor}
\label{corprop23} If $Q$ is an automorphic loop and $x,y \in Q$, then

\begin{equation}
\label{aaip2}
((y)L^{-1}_x)^{-1} = (y^{-1})R^{-1}_{x^{-1}}.
\end{equation}
\end{cor}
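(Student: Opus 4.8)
The plan is to reduce the identity directly to the (AAIP), since the corollary is essentially a transcription of that property into the language of the translation maps. First I would give the left quotient a name: set $z = (y)L^{-1}_x$. By Lemma \ref{lema20}(c), applied with this $z$, the element $z$ is precisely the unique solution of $xz = y$. In particular the assertion to be proved, namely $((y)L^{-1}_x)^{-1} = (y^{-1})R^{-1}_{x^{-1}}$, becomes a statement about the inverse $z^{-1}$.

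Next I would invoke Proposition \ref{prop23}. Taking inverses on both sides of $xz = y$ and applying the antiautomorphic inverse property $(xz)^{-1} = z^{-1}x^{-1}$ gives $z^{-1}x^{-1} = y^{-1}$. This single algebraic line is the whole content of the corollary; what remains is only to recognize it in the translation notation.

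Finally I would read the equation $z^{-1}x^{-1} = y^{-1}$ back through the right-action convention. Using Lemma \ref{lema20}(d) with $x$ replaced by $x^{-1}$, the element $(y^{-1})R^{-1}_{x^{-1}}$ is characterized as the unique $w$ with $wx^{-1} = y^{-1}$. Since $z^{-1}$ satisfies exactly this equation, uniqueness of the right quotient forces $z^{-1} = (y^{-1})R^{-1}_{x^{-1}}$, and substituting $z = (y)L^{-1}_x$ yields the claim.

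I do not expect a genuine obstacle here; the proof is a one-line consequence of (AAIP). The only point that requires care is keeping the right-action convention straight, so that $(y)L^{-1}_x$ is interpreted as the solution of $xz = y$ and $(y^{-1})R^{-1}_{x^{-1}}$ as the solution of $wx^{-1} = y^{-1}$. Once these two readings are fixed, matching them across the identity $z^{-1}x^{-1} = y^{-1}$ is immediate.
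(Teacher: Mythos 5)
Your proof is correct and is exactly the argument the paper intends: the paper gives no written proof, only the remark that the corollary ``is a consequence of the (AAIP)'', and your reduction of $((y)L^{-1}_x)^{-1} = (y^{-1})R^{-1}_{x^{-1}}$ to applying $(xz)^{-1}=z^{-1}x^{-1}$ to $xz=y$ and then reading off the unique right quotient is that consequence, with the translation conventions handled correctly.
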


\begin{prop}
\label{prop24} (\cite[Lemma $2.7$]{KKPV16}) In an automorphic loop $Q$, the following holds, for all $x\in Q$:

\begin{equation}
\label{tx2}
T^{-1}_x = T_{x^{-1}}.
\end{equation}
\end{prop}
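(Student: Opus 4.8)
The plan is to work with the inversion map $J\colon Q\to Q$, $(y)J=y^{-1}$, which by the (AAIP) (Proposition~\ref{prop23}) is an anti-automorphism, and to combine it with the defining feature of automorphic loops, namely that every $T_x$ is itself an automorphism. The target identity $T_x^{-1}=T_{x^{-1}}$ will follow by conjugating $T_x^{-1}$ by $J$: this conjugation will simultaneously be seen to transform $T_x^{-1}$ into $T_{x^{-1}}$ and to fix $T_x^{-1}$.

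First I would record how $J$ interacts with the translations. Reading compositions from left to right, a direct calculation from the (AAIP) (this is essentially Corollary~\ref{corprop23}) gives the ``swap'' identities
\[
J\,L_x\,J=R_{x^{-1}},\qquad J\,R_x\,J=L_{x^{-1}},
\]
since, for instance, $(y)J L_x J=(xy^{-1})^{-1}=yx^{-1}=(y)R_{x^{-1}}$. As $J$ is an involution ($J^{-1}=J$), the second identity also yields $L_{x^{-1}}^{-1}=J\,R_x^{-1}\,J$.

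Next I would substitute these into the definition of $T_{x^{-1}}$ and cancel the inner $JJ=\mathrm{id}$:
\[
T_{x^{-1}}=R_{x^{-1}}L_{x^{-1}}^{-1}=(J L_x J)(J R_x^{-1} J)=J\,(L_x R_x^{-1})\,J=J\,T_x^{-1}\,J,
\]
where the last equality is Corollary~\ref{cor22} in the form $T_x^{-1}=L_x R_x^{-1}$. It then remains to show that conjugation by $J$ leaves $T_x^{-1}$ unchanged, and this is precisely where the automorphic hypothesis is indispensable: since $Q$ is automorphic, $T_x\in Inn(Q)\subseteq Aut(Q)$, and every automorphism preserves inverses---from $((y)\varphi)((y^{-1})\varphi)=(yy^{-1})\varphi=1$ one reads off $(y^{-1})\varphi=((y)\varphi)^{-1}$, i.e. $J\varphi=\varphi J$ for all $\varphi\in Aut(Q)$. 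Hence $J$ commutes with $T_x$ and with $T_x^{-1}$, so $J\,T_x^{-1}\,J=T_x^{-1}$, and comparing with the previous display gives $T_{x^{-1}}=T_x^{-1}$.

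The conceptual content is light---everything rests on the (AAIP) and on $T_x$ being an automorphism---so the step I would watch most carefully is purely bookkeeping: keeping the order of composition consistent in the postfix operator notation, so that the swap identities and the $JJ$ cancellation are applied on the correct side. The genuine ``obstacle'' is simply to make sure the automorphic hypothesis is actually invoked, since the identity can fail for general loops, where $T_x$ need neither be an automorphism nor commute with $J$.
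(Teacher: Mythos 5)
Your argument is correct. Note first that the paper itself gives no proof of this proposition---it is quoted from \cite[Lemma~2.7]{KKPV16}---so there is no internal proof to compare against; what you have written is a legitimate self-contained derivation from facts the paper does state (the AAIP of Proposition~\ref{prop23} and the defining property $Inn(Q)\subseteq Aut(Q)$). The two swap identities $J L_x J = R_{x^{-1}}$ and $J R_x J = L_{x^{-1}}$ check out in the paper's postfix convention, the cancellation $T_{x^{-1}} = (JL_xJ)(JR_x^{-1}J) = J(L_xR_x^{-1})J = JT_x^{-1}J$ is sound (and for $L_xR_x^{-1}=T_x^{-1}$ you do not even need Corollary~\ref{cor22}; it is just the inverse of the product $R_xL_x^{-1}$), and the final step correctly isolates where the automorphic hypothesis enters. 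Two small points deserve an explicit word: the fact that $J$ is a well-defined involution, and the passage from $((y)\varphi)\cdot((y^{-1})\varphi)=1$ to $(y^{-1})\varphi=((y)\varphi)^{-1}$, both rely on elements having unique two-sided inverses, which in an automorphic loop follows from power associativity (Proposition~\ref{prop21}(c)); in a general loop a right inverse need not be a left inverse, so this is worth saying rather than leaving implicit.
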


Define $\gamma: Q \to Q$ by $(x)\gamma = x^2$. We say that $Q$ is \emph{uniquely $2$-divisible} if $\gamma$ is a bijection. In this case, we define $x^{1/2} = (x)\gamma^{-1}$, for every $x\in Q$. Note that, if $\varphi \in Aut(Q)$ and $x\in Q$, then

\begin{equation}
\label{x12}
(x^{1/2})\varphi = ((x)\varphi)^{1/2}
\end{equation}

\begin{prop}
\label{prop25} (\cite[Corollary $4.7$]{KKPV16}) A finite automorphic loop is uniquely $2$-divisible if and only if it has odd order.
\end{prop}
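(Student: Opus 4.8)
The plan is to exploit power associativity (Proposition \ref{prop21}(c)), which guarantees that for every $x\in Q$ the cyclic subloop $\langle x\rangle$ is an honest abelian group, so that integer exponents obey the usual rules $x^{a}x^{b}=x^{a+b}$ and $(x^{a})^{b}=x^{ab}$. Writing $N=|Q|$ and recalling that $Q$ is finite, the map $\gamma\colon x\mapsto x^{2}$ is a bijection if and only if it is injective, if and only if it is surjective; so I am free to prove whichever of the two is more convenient in each direction.

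For the implication ``odd order $\Rightarrow$ uniquely $2$-divisible'' I would produce an explicit inverse of $\gamma$. The key input is that a finite automorphic loop satisfies the Lagrange property, so the order of each element divides $N$ and hence $x^{N}=1$ for every $x\in Q$. When $N$ is odd I set $k=(N+1)/2$ and define $\sigma\colon Q\to Q$ by $(x)\sigma=x^{k}$, which makes sense by power associativity. Then, computing inside the group $\langle x\rangle$,
\begin{equation*}
(x^{k})^{2}=x^{2k}=x^{N+1}=x \qquad\text{and}\qquad (x^{2})^{k}=x^{2k}=x^{N+1}=x,
\end{equation*}
so $\sigma$ is a two-sided inverse of $\gamma$ and $\gamma$ is a bijection, i.e. $Q$ is uniquely $2$-divisible.

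For the converse I would argue by contraposition: if $N$ is even I must exhibit a failure of injectivity of $\gamma$. Here the relevant structural fact is that a finite automorphic loop of even order contains an involution, i.e. an element $t\neq 1$ with $t^{2}=1$ (the Cauchy property for the prime $2$). Granting this, $(t)\gamma=t^{2}=1=1^{2}=(1)\gamma$ with $t\neq 1$, so $\gamma$ is not injective and therefore not a bijection; thus a uniquely $2$-divisible finite automorphic loop cannot have even order. Combining the two directions gives the equivalence.

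The main obstacle is that neither of the two structural inputs I rely on---Lagrange's property (to force $x^{N}=1$) and the existence of an involution in even order (the Cauchy property at $2$)---is among the elementary identities recorded in Section \ref{sec2}; both are genuinely loop-theoretic theorems about the variety of automorphic loops. Consequently the real work is not the exponent bookkeeping above but the verification of these two facts, which is exactly the content developed in \cite{KKPV16}; once they are in hand the equivalence follows from the short computation with the power map $x\mapsto x^{(N+1)/2}$.
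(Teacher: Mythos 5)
The paper gives no proof of this proposition---it is imported verbatim as Corollary 4.7 of \cite{KKPV16}---so the only fair comparison is with that source, and your argument is essentially the one used there: the equivalence is deduced from the Lagrange-type theorem (element orders divide $|Q|$) and the Cauchy theorem at the prime $2$, both of which are genuine structural theorems about automorphic loops established in \cite{KKPV16}, with the remaining work being exactly your computation that $x\mapsto x^{(N+1)/2}$ inverts squaring when $N$ is odd. Your proposal is correct conditional on those two inputs, and you are right to flag them as the real content rather than the exponent bookkeeping; as a self-contained proof it would of course be incomplete, but as a reduction to the cited literature it matches the intended derivation.
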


A half-isomorphism $f:L \rightarrow L'$ is called \emph{special} if the inverse mapping $f^{-1}:L' \rightarrow L$ is also a half-isomorphism. Every trivial half-isomorphism is special. The next example presents a half-isomorphism that is not special.

\begin{exem}
\label{ex1} Let $L = \{1,2,...,7\}$ and consider the following Cayley tables:

\begin{center}
\begin{minipage}{.4\textwidth}
 \centering
\begin{tabular}{c|ccccccc}
$*$&1&2&3&4&5&6&7\\
\hline
1&1&2&3&4&5&6&7\\
2&2&3&4&5&6&7&1\\
3&3&4&5&6&7&1&2\\
4&4&5&6&7&1&2&3\\
5&5&6&7&1&2&3&4\\
6&6&7&1&2&3&4&5\\
7&7&1&2&3&4&5&6\\
\end{tabular}

\end{minipage} 
 \begin{minipage}{.4\textwidth}
\begin{tabular}{c|ccccccc}
$\cdot$&1&2&3&4&5&6&7\\
\hline
1&1&2&3&4&5&6&7\\
2&2&3&7&5&6&1&4\\
3&3&4&5&6&7&2&1\\
4&4&5&6&7&1&3&2\\
5&5&6&4&1&2&7&3\\
6&6&7&1&2&3&4&5\\
7&7&1&2&3&4&5&6\\
\end{tabular}

\end{minipage}
\end{center}

One can see that $C_7 = (L,*)$ is the cyclic group of order $7$ and $L' = (L,\cdot)$ is a non-associative loop. Defining $f:C_7 \to L'$; $f(x) = x$, then it is not difficult to see that $f$ is a half-isomorphism. Furthermore, 

\begin{center}
$f(3*2) = 4 = f(3)\cdot f(2)\not = f(2)\cdot f(3)$ \hspace{0.2cm} and  \hspace{0.2cm} $f(3*6) = 1 = f(6)\cdot f(3)\not = f(3)\cdot f(6)$,
\end{center}

then $f$ is a nontrivial half-isomorphism. Now note that $f^{-1}(2\cdot 3) = 7\not \in \{2*3,3*2\}$, and so $f^{-1}$ is not a half-isomorphism.\qed
\end{exem}

\begin{prop}
\label{prop26} (\cite[Theorem $2.5$]{GA192})
Let $L$ and $L'$ be loops and $f:L \rightarrow L'$ be a half-isomorphism. Then the following statements are equivalent:

(a) $f$ is special.

(b) $\{f(x*y),f(y*x)\} = \{f(x)\cdot f(y), f(y)\cdot f(x)\}$ for any $x,y\in L$.

(c) For all $x,y\in L$ such that $x*y = y*x$, we have $f(x)\cdot f(y) = f(y)\cdot f(x)$.
\end{prop}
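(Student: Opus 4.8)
The plan is to establish the chain of equivalences as two separate equivalences, (a) $\Leftrightarrow$ (b) and (b) $\Leftrightarrow$ (c), using throughout a single ``free'' inclusion that the hypothesis on $f$ already supplies. Indeed, since $f$ is a half-isomorphism, applying the defining property to both the pair $(x,y)$ and the pair $(y,x)$ gives $f(x*y)\in\{f(x)\cdot f(y),f(y)\cdot f(x)\}$ and $f(y*x)\in\{f(x)\cdot f(y),f(y)\cdot f(x)\}$, so that
\[
\{f(x*y),f(y*x)\}\subseteq\{f(x)\cdot f(y),f(y)\cdot f(x)\}
\]
holds for every $x,y\in L$. The entire proposition then reduces to deciding, in each of the three statements, when this inclusion can be promoted to an equality.

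For (a) $\Leftrightarrow$ (b), first I would unwind the condition ``$f^{-1}$ is a half-isomorphism'' by the substitution $u=f(x)$, $v=f(y)$: it says exactly that $f^{-1}(f(x)\cdot f(y))\in\{x*y,y*x\}$ for all $x,y$, equivalently, after applying $f$, that $f(x)\cdot f(y)\in\{f(x*y),f(y*x)\}$, and symmetrically for $f(y)\cdot f(x)$. Hence $f$ is special if and only if the reverse inclusion $\{f(x)\cdot f(y),f(y)\cdot f(x)\}\subseteq\{f(x*y),f(y*x)\}$ holds for every $x,y$; combining this with the free inclusion above yields precisely the set equality in (b). This handles both directions simultaneously.

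For (b) $\Leftrightarrow$ (c), the crucial point is that $f$ is a bijection, so $|\{f(x*y),f(y*x)\}|=|\{x*y,y*x\}|$, which is a singleton exactly when $x*y=y*x$. I would compare cardinalities relative to the free inclusion: equality in (b) can fail only when the right-hand set has two elements while the left-hand set collapses to one, that is, when $f(x)\cdot f(y)\neq f(y)\cdot f(x)$ but $f(x*y)=f(y*x)$, and the latter forces $x*y=y*x$ by injectivity of $f$. Thus (b) fails for some pair if and only if there is a pair with $x*y=y*x$ and $f(x)\cdot f(y)\neq f(y)\cdot f(x)$, whose negation is exactly (c).

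I do not expect a genuine obstacle: the argument is pure bookkeeping with two-element sets, and the only step requiring care is the cardinality comparison in the last paragraph, where one must separate the case $x*y=y*x$ (both sides singletons, hence forced equal by the free inclusion) from the case $x*y\neq y*x$ (the left-hand side already has two elements, forcing the equality). Notably, the loop structure enters only through the bijectivity of $f$; no associativity, power associativity, or automorphic hypothesis is used for this proposition.
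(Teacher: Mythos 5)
Your argument is correct and complete. Note that the paper itself gives no proof of this proposition; it is quoted from \cite[Theorem 2.5]{GA192}, so there is no in-text argument to compare against. Your two reductions are both sound: the ``free'' inclusion $\{f(x*y),f(y*x)\}\subseteq\{f(x)\cdot f(y),f(y)\cdot f(x)\}$ is immediate from applying the half-isomorphism condition to the pairs $(x,y)$ and $(y,x)$; the unwinding of ``$f^{-1}$ is a half-isomorphism'' via the substitution $u=f(x)$, $v=f(y)$ (legitimate because $f$ is surjective) correctly identifies (a) with the reverse inclusion, hence with the set equality (b); and the cardinality comparison for (b) $\Leftrightarrow$ (c) is airtight, since under the free inclusion a failure of equality forces the left-hand set to be a singleton (so $x*y=y*x$ by injectivity) and the right-hand set to be a doubleton (so $f(x)\cdot f(y)\neq f(y)\cdot f(x)$), and conversely. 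Your closing observation that only bijectivity of $f$ is used --- no loop axioms at all --- is accurate and is worth retaining, as it shows the equivalence holds for bijective half-homomorphisms of arbitrary magmas.
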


\begin{prop}
\label{prop27} Let $L$ and $L'$ be power associative loops and $f:L \rightarrow L'$ be a half-isomorphism. Then $f(x^n) = f(x)^n$, for all $x\in L$ and $n\in \mathbb{Z}$.
\end{prop}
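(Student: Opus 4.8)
The plan is to argue by induction on $n$, the whole point being that in a power associative loop every cyclic subloop $\langle f(x)\rangle$ is an abelian group, so the two candidate values permitted by the half-isomorphism condition collapse into a single power of $f(x)$. First I would dispose of the small cases. Applying $f$ to $1\cdot 1 = 1$ gives $f(1)\in\{f(1)f(1)\}$, hence $f(1) = f(1)^2$ and so $f(1) = 1$ by left cancellation in $L'$; this settles $n=0$. The case $n=1$ is trivial, and $n=2$ is automatic, since $f(x^2) = f(x\cdot x)\in\{f(x)f(x)\} = \{f(x)^2\}$.

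For the inductive step with $n\geq 2$, I would write $x^n = x^{n-1}\cdot x$, which is legitimate because $\langle x\rangle$ is a group by power associativity of $L$, and then apply the half-isomorphism condition:
\[
f(x^n)\in\bigl\{\,f(x^{n-1})f(x),\ f(x)f(x^{n-1})\,\bigr\}.
\]
By the induction hypothesis $f(x^{n-1}) = f(x)^{n-1}$, so both entries lie in $\langle f(x)\rangle$. The crucial observation is that, since $L'$ is power associative, $\langle f(x)\rangle$ is a cyclic, hence abelian, group, and therefore
\[
f(x)^{n-1}f(x) = f(x)^n = f(x)f(x)^{n-1}.
\]
The two options coincide, and we conclude $f(x^n) = f(x)^n$. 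This is really the heart of the matter: the ambiguity built into the definition of a half-isomorphism is harmless as soon as the two factors are powers of a common element.

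It remains to treat negative exponents, and this is where I expect the only genuine subtlety. Applying $f$ to $x\cdot x^{-1} = 1$ yields $f(x)f(x^{-1}) = 1$ or $f(x^{-1})f(x) = 1$, so a priori we only learn that $f(x^{-1})$ is a \emph{one-sided} inverse of $f(x)$. To upgrade this to $f(x^{-1}) = f(x)^{-1}$, I would use that $\langle f(x)\rangle$ is a group, so $f(x)^{-1}$ is a \emph{two-sided} inverse of $f(x)$ in $L'$; by the uniqueness of the solutions of $f(x)\cdot z = 1$ and $z\cdot f(x) = 1$ in the loop $L'$, either alternative forces $f(x^{-1}) = f(x)^{-1}$. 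Once the case $n=-1$ is secured, every negative power follows by applying the already-established positive case to the element $x^{-1}$: for $m>0$,
\[
f(x^{-m}) = f\bigl((x^{-1})^m\bigr) = f(x^{-1})^m = \bigl(f(x)^{-1}\bigr)^m = f(x)^{-m},
\]
the last equality again holding inside the group $\langle f(x)\rangle$. This exhausts all cases and completes the argument.
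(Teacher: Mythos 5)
Your proof is correct and follows essentially the same route as the paper's: $f(1)=1$, induction on positive powers using that the two half-isomorphism alternatives coincide inside the abelian group $\langle f(x)\rangle$, and the one-sided-inverse-forced-to-be-two-sided argument for negative exponents. The only cosmetic difference is that the paper handles all negative powers at once via $1 = f(x^n * x^{-n})$, whereas you settle $n=-1$ first and then reduce to positive powers of $x^{-1}$; both are equally valid.
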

\begin{proof} By definition, we have  $f(1_L) = f(1_L*1_L)=f(1_L)\cdot f(1_L)$, and then $f(1_L) = 1_{L'}$. Let $x\in L$ and $n\in \mathbb{N}^*$. Suppose that $f(x^n) = f(x)^n$. Since $L$ and $L'$ are power associative, we have that

\begin{center}
$f(x^{n+1}) = f(x^n*x)\in \{f(x)^n\cdot f(x), f(x)\cdot f(x)^n\} = \{f(x)^{n+1}\}$.
\end{center}

Then $f(x^m) = f(x)^m$, for all $m\in \mathbb{N}$. Now, for $n\in \mathbb{N}$, we have that

\begin{center}
$1_{L'} = f(x^n*x^{-n})\in \{f(x)^n\cdot f(x^{-n}), f(x^{-n})\cdot f(x)^n\}$,
\end{center}

and then $f(x^{-n}) = f(x)^{-n}$ since $L'$ is power associative.
\end{proof}

\section{Some results about automorphic loops}

In this section, we will prove that for every uniquely $2$-divisible automorphic loop the equation\\ $x\cdot(x\cdot y) = (y\cdot x)\cdot x $ is equivalent to the equation $x\cdot y = y\cdot x$. First, we need to prove some identities for automorphic loops.

\begin{lema}
\label{lema31}
In an automorphic loop $Q$, the following identities hold:

\begin{eqnarray}
\label{eql31a}
(xy)^2 =(x\cdot (y)R^{-1}_{x^{-1}})y\\
\label{eql31a2}
(xy)^2 =x \cdot(xy)L^{-1}_{y^{-1}}\\
\label{eql31b}
(xyx^{-1})^2 = (xy)(yx^{-1})\\
\label{eql31c}
(x^2)R^{-1}_y = (x)R^{-1}_{((x)R^{-1}_y)^{-1}}\\
\label{eql31d}
(yx)x^{-1} = x^{-1}(xy)\\
\label{eql31e}
x^2= (x)R^{-1}_y \cdot (x)L^{-1}_{y^{-1}}\\
\label{eql31f}
(y^2x)x^{-1} = (yx)(x^{-1}y)\\
\label{eql31g}
(y^2x)x^{-1} = (yx^{-1})(xy). 
\end{eqnarray}

\end{lema}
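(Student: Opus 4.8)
The plan is to rewrite each of the eight identities as an equality of images under the basic translations $R$, $L$ and $T$, and then reduce it with the toolbox already assembled: Lemma \ref{lema20}, the power-associative rules of Proposition \ref{prop22}, the abelian relations of Corollary \ref{cor22}, the (AAIP) of Proposition \ref{prop23} together with \eqref{aaip2}, and \eqref{tx2}. The only genuinely nonassociative input is Proposition \ref{prop22}(d), namely $x((yx)(zx)) = ((xy)(xz))x$, and I expect it to be the source of every two-variable identity on the list. Throughout I will use that $Q$ is automorphic, so that $T_x$ is an automorphism and hence conjugation $y \mapsto (y)T_{x^{-1}} = xyx^{-1}$ commutes with squaring, i.e. $(xyx^{-1})^2 = xy^2x^{-1}$.

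I would first dispatch the single-element identity \eqref{eql31d}, $(yx)x^{-1} = x^{-1}(xy)$, which in operator form reads $R_xR_{x^{-1}} = L_xL_{x^{-1}}$. This follows at once from $T_{x^{-1}} = T_x^{-1}$ (equation \eqref{tx2}) rewritten via \eqref{tx1} as $R_{x^{-1}}L_{x^{-1}}^{-1} = L_xR_x^{-1}$, after moving factors across using the commutativity in Corollary \ref{cor22}. I would isolate this operator form, because it lets me replace $(\cdot)R_xR_{x^{-1}}$ by $(\cdot)L_xL_{x^{-1}}$ and thus rewrite $(y^2x)x^{-1}$ as $x^{-1}(xy^2)$ inside the mixed identities \eqref{eql31f} and \eqref{eql31g}.

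Next I would exploit the many equivalences among the remaining identities in order to shrink the real work. Dividing \eqref{eql31a} on the right by $y$ and using that $L_x$ commutes with $R_{x^{-1}}^{-1}$ (Corollary \ref{cor22}) turns it into the single relation $(xy)^2R_y^{-1} = (xy)R_{x^{-1}}^{-1}$; the same relation is exactly what \eqref{eql31c} reduces to (after substituting $x = (a)R_y$ and relabelling). Dually, the left-division pair \eqref{eql31a2} and \eqref{eql31e} both collapse to $(xy)^2L_x^{-1} = (xy)L_{y^{-1}}^{-1}$, and applying the (AAIP) together with \eqref{aaip2} to convert between the $R^{-1}$- and $L^{-1}$-forms shows that this is precisely the (AAIP)-image of the first relation (it returns to it after $x \mapsto y^{-1}$, $y \mapsto x^{-1}$). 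Hence the entire cluster \eqref{eql31a}, \eqref{eql31a2}, \eqref{eql31c}, \eqref{eql31e} collapses to the single evaluation $(xy)^2/y = (xy)/x^{-1}$.

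The heart of the proof, and the step I expect to be the main obstacle, is establishing that one core relation without associativity. Here I would seed the argument with Proposition \ref{prop22}(d) at $z=y$, which gives $(xy)^2x = x(yx)^2$, and then try to strip the outer $x$ down to a division by $y$ using \eqref{eql31d} and the $R$--$L$ commutations. The delicate point is that Proposition \ref{prop22}(a)--(c) only permit the removal of parentheses among powers of a single element, so each regrouping must be justified one element at a time and products must be flipped through the (AAIP) rather than freely reassociated; the bookkeeping of left versus right divisions, handled by Lemma \ref{lema20} and \eqref{aaip2}, is what is most likely to require care. Once the cluster is in hand, I would obtain the conjugation-type identities \eqref{eql31b}, \eqref{eql31f}, \eqref{eql31g} by expanding $(xyx^{-1})^2 = xy^2x^{-1}$ and $(y^2x)x^{-1} = x^{-1}(xy^2)$ through the automorphism $T_{x^{-1}}$ and \eqref{eql31d}, and reconciling the resulting products with the cluster identities and a further application of Proposition \ref{prop22}(d); since \eqref{eql31f} and \eqref{eql31g} share the same left-hand side, the residual content there is exactly the single product identity $(yx)(x^{-1}y) = (yx^{-1})(xy)$, which I would settle last.
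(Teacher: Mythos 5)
Your peripheral reductions are sound and in several places coincide with the paper's: \eqref{eql31d} does follow from \eqref{tx2} exactly as you describe, \eqref{eql31b} and \eqref{eql31c} are substitution instances of \eqref{eql31a}, \eqref{eql31e} is a substitution instance of \eqref{eql31a2}, and (AAIP) together with \eqref{aaip2} and Corollary \ref{cor22} does carry \eqref{eql31a} into \eqref{eql31a2}. So the lemma genuinely collapses to proving \eqref{eql31a} plus deriving \eqref{eql31f}, \eqref{eql31g}. The gap is at precisely the point you flag as the ``heart'': you never prove \eqref{eql31a}, and the seed you propose cannot do it. Proposition \ref{prop22}(d) at $z=y$ gives only $x\cdot(yx)^2=(xy)^2\cdot x$, a relation \emph{between} the two squares $(xy)^2$ and $(yx)^2$ (indeed, by \eqref{eql31d} and Proposition \ref{prop22}(a) the conjugate of $yx$ by $x$ is $xy$, so this relation is already implied by your observation that conjugation commutes with squaring). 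It supplies no independent expression for either square, so no amount of stripping with \eqref{eql31d} and the single-variable commutations will produce the new content of \eqref{eql31a}, which writes $(xy)^2$ as a product of two factors neither of which is $xy$. The missing idea --- the one the paper uses throughout --- is to manufacture a word in translations that fixes $1$ (hence, $Q$ being automorphic, is an automorphism) \emph{and} fixes a chosen element, so that it fixes that element's square. Concretely, $\varphi=R_y^{-1}R_{x^{-1}}L_{xy}$ satisfies $(1)\varphi=(xy)(y^{-1}x^{-1})=1$ by (AAIP) and $(xy)\varphi=(xy)(xx^{-1})=xy$, whence $((xy)^2)\varphi=(xy)^2$; unwinding this fixed-point equation, using Corollary \ref{cor22} to commute $L_x$ past $R_{x^{-1}}^{-1}$, is exactly \eqref{eql31a}. (The analogous word $L_x^{-1}L_{y^{-1}}L_{xy}$ gives \eqref{eql31a2} directly, so the (AAIP) transfer is not even needed.)

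The same device is what your plan lacks for \eqref{eql31f} and \eqref{eql31g}: $\phi=R_uR_{u^{-1}}$ fixes $1$, hence is an automorphism, so $(v^2u)u^{-1}=\bigl((vu)u^{-1}\bigr)^2$; expanding the right-hand side by \eqref{eql31e} with the two choices $z=u^{-1}$ and $z=u$, and simplifying with \eqref{eql31d} and Proposition \ref{prop22}, yields the two factorizations $(vu)(u^{-1}v)$ and $(vu^{-1})(uv)$. Your outline defers the residual identity $(yx)(x^{-1}y)=(yx^{-1})(xy)$ ``to be settled last'' with no method, but it is exactly as hard as the core identity and needs the same trick. In short: the skeleton of reductions is right, but the two load-bearing steps are absent, and the tool you expected to carry them (Proposition \ref{prop22}(d)) is not the one that does.
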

\begin{proof} (i) Consider $\varphi = R^{-1}_{y} R_{x^{-1}} L_{xy}$. We have that $(1)\varphi = (xy)(y^{-1}x^{-1}) = 1$, where we used (AAIP) in the second equality, and then $\varphi \in Aut(Q)$. Furthermore, $(xy)\varphi =(xy)(xx^{-1}) = xy$, and so $((xy)^2)\varphi = (xy)^2$. Thus

\begin{center}
$((xy)^2)R^{-1}_{y} R_{x^{-1}} L_{xy} = (xy)L_{xy}$,
\end{center}

and hence $(xy)^2 = (y)L_xR^{-1}_{x^{-1}}R_y$. Since $R^{-1}_{x^{-1}}L_x = L_xR^{-1}_{x^{-1}}$ (Corollary \ref{cor22}), we get \eqref{eql31a}.

(ii) Consider $\varphi = L^{-1}_{x} L_{y^{-1}} L_{xy}$. We have that $\varphi(1) = (xy)
(y^{-1}x^{-1}) = 1$, where we use (AAIP) in the second equality, and then $\varphi \in Aut(Q)$. Furthermore, $\varphi(xy) =(xy)(y^{-1}y) = xy$, and so $((xy)^2)\varphi = (xy)^2$. Thus

\begin{center}
$((xy)^2) L^{-1}_{x} L_{y^{-1}} L_{xy} = (xy)L_{xy}$,
\end{center}

and hence $(xy)^2 = (xy) L^{-1}_{y^{-1}} L_x$ and we have \eqref{eql31a2}.

(iii) Putting $y = vx^{-1}$ in \eqref{eql31a}, we get

\begin{center}
$(xvx^{-1})^2 = (x(vx^{-1})R^{-1}_{x^{-1}})(vx^{-1})$,
\end{center}

and then \eqref{eql31b} follows from the fact that $(vx^{-1})R^{-1}_{x^{-1}} = v$.

(iv) Putting $x = (u)R^{-1}_{y}$ in \eqref{eql31a}, we get

\begin{center}
$((u)R^{-1}_{y}y)^2 =((u)R^{-1}_{y} (y)R^{-1}_{((u)R^{-1}_{y})^{-1}})y$
\end{center}

By Lemma \ref{lema20} (d), we have $u = (u)R^{-1}_{y}y$, and then

\begin{center}
$(u^2)R^{-1}_{y} =  (u)R^{-1}_{y} (y)R^{-1}_{((u)R^{-1}_{y})^{-1}}$,
\end{center}

and using Lemma \ref{lema20} (d) again and Proposition \ref{prop22} we get

\begin{center}
$(u^2)R^{-1}_{y}((u)R^{-1}_{y})^{-1} =(u)R^{-1}_{y} (y)R^{-1}_{((u)R^{-1}_{y})^{-1}}((u)R^{-1}_{y})^{-1} = (u)R^{-1}_{y} y = u$,
\end{center}

and hence we have \eqref{eql31c}.

(v) By \eqref{tx2}, $(y)T_x = (y)T^{-1}_{x^{-1}}$, and then $(y)R_xL^{-1}_x = (y)L_{x^{-1}}R^{-1}_{x^{-1}}$. Thus $(y)R_xL^{-1}_xR_{x^{-1}} = (y)L_{x^{-1}}$ and hence $(y)R_xR_{x^{-1}}L^{-1}_x = (y)L_{x^{-1}}$ by Corollary \ref{cor22}. It follows that $(y)R_xR_{x^{-1}} = (y)L_{x^{-1}}L_x = (y)L_xL_{x^{-1}}$, where we used Corollary \ref{cor22} in the second equality, and therefore we have \eqref{eql31d}.

(vi) Putting $x = (u)R^{-1}_{y}$ in \eqref{eql31a2}, we get

\begin{center}
$((u)R^{-1}_{y}y)^2 =(u)R^{-1}_{y} \cdot ((u)R^{-1}_{y}y)L^{-1}_{y^{-1}}$,
\end{center}

and then \eqref{eql31e} follows from the fact that $(u)R^{-1}_{y}y = u$ (Lemma \ref{lema20} (d)).

(vii) Let $u,v\in Q$ and $\phi = R_uR_{u^{-1}}$. It is clear that $(1)\phi =1$, and then $\phi \in Aut(Q)$. Thus $(v^2)\phi = ((v)\phi)^2$, and we have $((vu)u^{-1})^2 = (v^2u)u^{-1}$. By \eqref{eql31e}, we have

\begin{equation}
\label{eqlema31a}
(v^2u)u^{-1}= ((vu)u^{-1})R^{-1}_z \cdot ((vu)u^{-1})L^{-1}_{z^{-1}}, \textrm{ for all } z\in Q.
\end{equation}

(vii.1) Putting $z = u^{-1}$ in \eqref{eqlema31a}, we get 

\begin{center}
$(v^2u)u^{-1}= ((vu)u^{-1})R^{-1}_{u^{-1}} \cdot ((vu)u^{-1})L^{-1}_{u} = (vu) \cdot ((vu)u^{-1})L^{-1}_{u} $.
\end{center}

Using \eqref{eql31d} and Proposition \ref{prop22} (a) we get $((vu)u^{-1})L^{-1}_{u} = (u^{-1}(uv))L^{-1}_{u} = (u(u^{-1}v))L^{-1}_{u} = u^{-1}v$. Therefore $(v^2u)u^{-1}= (vu)(u^{-1}v)$ and we have \eqref{eql31f}.

(vii.2) Putting $z = u$ in \eqref{eqlema31a}, we get 

\begin{center}
$(v^2u)u^{-1}= ((vu)u^{-1})R^{-1}_u \cdot ((vu)u^{-1})L^{-1}_{u^{-1}} $.
\end{center}

Using Proposition \ref{prop22} (b) we get $((vu)u^{-1})R^{-1}_u = ((vu^{-1})u)R^{-1}_u = vu^{-1}$. Using \eqref{eql31d} we get $((vu)u^{-1})L^{-1}_{u} = (u^{-1}(uv))L^{-1}_{u} = uv$. Therefore $(v^2u)u^{-1}= (vu^{-1})(uv)$ and we have \eqref{eql31g}.
\end{proof}

\begin{cor}
\label{lema32}
Let $Q$ be an automorphic loop and $x,y\in Q$. Consider that one of the following hold:

(a) $x^{-1}(xy^2) = (xy)(x^{-1}y)$ or  \hspace{1cm}(b) $x^{-1}(xy^2) = (x^{-1}y)(xy)$. 

Then $xy=yx$.
\end{cor}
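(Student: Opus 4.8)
The plan is to recognize that the left-hand side $x^{-1}(xy^2)$, common to both hypotheses, can be rewritten through the identities already collected in Lemma \ref{lema31}, after which each equation collapses to a cancellation in the loop. The starting move is to substitute $y^2$ for $y$ in \eqref{eql31d}, which yields $x^{-1}(xy^2) = (y^2x)x^{-1}$. This is the crucial step, because \eqref{eql31f} and \eqref{eql31g} both express $(y^2x)x^{-1}$ in a factored form, giving the two readings

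\begin{center}
$x^{-1}(xy^2) = (yx)(x^{-1}y)$ \qquad and \qquad $x^{-1}(xy^2) = (yx^{-1})(xy)$.
\end{center}

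For hypothesis (a), I would combine $x^{-1}(xy^2) = (yx)(x^{-1}y)$ with the assumption $x^{-1}(xy^2) = (xy)(x^{-1}y)$ to obtain $(xy)(x^{-1}y) = (yx)(x^{-1}y)$. Since right translation by $x^{-1}y$ is a bijection, right cancellation immediately gives $xy = yx$.

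For hypothesis (b), the analogous combination uses the second reading: from $x^{-1}(xy^2) = (yx^{-1})(xy)$ and the assumption $x^{-1}(xy^2) = (x^{-1}y)(xy)$ one gets $(x^{-1}y)(xy) = (yx^{-1})(xy)$, and right cancellation by $xy$ yields $x^{-1}y = yx^{-1}$. This only says that $x^{-1}$ commutes with $y$, so one final step is needed: by Lemma \ref{lema20}(e), $x^{-1}y = yx^{-1}$ is equivalent to $(y)T_{x^{-1}} = y$; using $T_{x^{-1}} = T_x^{-1}$ from \eqref{tx2} this reads $(y)T_x^{-1} = y$, hence $(y)T_x = y$, which is equivalent to $xy = yx$ again by Lemma \ref{lema20}(e).

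I expect the main obstacle to be one of recognition rather than computation: spotting that \eqref{eql31d}, \eqref{eql31f} and \eqref{eql31g} share the common quantity $(y^2x)x^{-1} = x^{-1}(xy^2)$ is precisely what makes both cases routine. The only genuinely extra wrinkle is in case (b), where cancellation delivers commutativity of $x^{-1}$ with $y$ instead of $x$ with $y$, which is why the short detour through the inner mapping $T_x$ and the identity \eqref{tx2} is required to finish.
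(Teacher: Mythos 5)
Your proof is correct and follows essentially the same route as the paper's: both rewrite $x^{-1}(xy^2)$ as $(y^2x)x^{-1}$ via \eqref{eql31d}, compare with \eqref{eql31f} (resp. \eqref{eql31g}), and cancel on the right. The only divergence is the last step of case (b), where the paper passes from $x^{-1}y=yx^{-1}$ to $xy=yx$ via Corollary \ref{cor21} (the subloop generated by a commutative set is commutative), while you use Lemma \ref{lema20}(e) together with $T_{x^{-1}}=T_x^{-1}$ from \eqref{tx2}; both arguments are valid.
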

\begin{proof}
(a) Using \eqref{eql31d} we get $(y^2x)x^{-1} =  (xy)(x^{-1}y)$, and then $(xy)(x^{-1}y) = (yx)(x^{-1}y)$ by \eqref{eql31f}. Hence $xy=yx$.

(b) Using \eqref{eql31d} we get $(y^2x)x^{-1} =  (x^{-1}y)(xy)$, and then $(x^{-1}y)(xy) = (yx^{-1})(xy)$ by \eqref{eql31g}. Thus $x^{-1}y = yx^{-1}$ and from Corollary \ref{cor21} we get $xy=yx$.
\end{proof}

\begin{lema}
\label{lema34}
In an automorphic loop $Q$, the following identities hold:

\begin{eqnarray}
\label{e177}
((x)R^{-1}_{((x)R^{-1}_y)^{-1}})^{-1}\cdot (x)R^{-1}_y = ((x)R^{-1}_y)^{-1}\cdot y^{-1}\\
\label{e245}
(((x)R^{-1}_{xy})^{-1}R^{-1}_{x^{-1}})^{-1}\cdot x= (x)R^{-1}_{yx}\\
\label{e215}
((x\cdot (y)R^{-1}_z)z)R^{-1}_y =(xz)R^{-1}_y \cdot ((z)R^{-1}_y)^{-1}\\
\label{e216}
(xy)R^{-1}_z \cdot ((y)R^{-1}_z)^{-1} = ((x)R^{-1}_z \cdot ((y)R^{-1}_z)^{-1})R_y\\
\label{e258}
x^{-1}((x)R^{-1}_{(y)T^{-1}_x} \cdot ((y)R^{-1}_xR^{-1}_{(y)T^{-1}_x})^{-1}) = (x)R^{-1}_{(y)T^{-1}_x} \cdot ((y)R^{-1}_{(y)T^{-1}_x})^{-1}\\
\label{e254}
((x)R^{-1}_{xy})^{-1}R^{-1}_{(y)R^{-1}_{xy} \cdot ((x)R^{-1}_{xy})^{-1}} = x.
\end{eqnarray}
\end{lema}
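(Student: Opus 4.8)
The plan is to prove each of the six identities by translating it into an elementary statement about loop elements and then deriving that statement from the identities of Lemma~\ref{lema31} together with the structural facts already assembled. The engine behind every step is the observation implicit in the proof of Lemma~\ref{lema31}: since $Inn(Q)$ is exactly the stabiliser of $1$ in $Mlt(Q)$ and $Q$ is automorphic, any product of translations that fixes $1$ is an automorphism. To this I add the property \eqref{aaip}, its consequence Corollary~\ref{corprop23} together with the dual statement $((y)R^{-1}_x)^{-1}=(y^{-1})L^{-1}_{x^{-1}}$ (also immediate from \eqref{aaip}), the power-shift rules of Proposition~\ref{prop22}, the relations $T_x=R_xL^{-1}_x=L^{-1}_xR_x$ of Corollary~\ref{cor22}, and $T^{-1}_x=T_{x^{-1}}$ from Proposition~\ref{prop24}. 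Throughout I will use the defining relations $(w)R^{-1}_y\cdot y=w$ and $x\cdot(w)L^{-1}_x=w$ (Lemma~\ref{lema20}) to pass freely between a divided term and the equation it satisfies.

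For \eqref{e245} the clean route is to exploit that $T_x\in Aut(Q)$ fixes $x$ (Lemma~\ref{lema20}(e), since $xx=xx$) and satisfies $(xy)T_x=((xy)x)L^{-1}_x=(x(yx))L^{-1}_x=yx$, where I use the definition of $T_x$, Proposition~\ref{prop22}(c) and Lemma~\ref{lema20}(c). Writing $a=(x)R^{-1}_{xy}$, I apply $T_x$ to $a(xy)=x$ to obtain $((a)T_x)(yx)=x$, i.e. $(a)T_x=(x)R^{-1}_{yx}$; the left-hand side of \eqref{e245} is exactly $(a)T_x$ once it is rewritten via $T_x=L^{-1}_xR_x$ and the dual of Corollary~\ref{corprop23}. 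Identity \eqref{e177} follows by combining \eqref{eql31c}, which identifies $(x)R^{-1}_{((x)R^{-1}_y)^{-1}}$ with $(x^2)R^{-1}_y$, with \eqref{eql31a} applied to $a=(x)R^{-1}_y$, which rewrites $(x^2)R^{-1}_y$ as $a\cdot(y)R^{-1}_{a^{-1}}$; setting $s=(y)R^{-1}_{a^{-1}}$ (so $sa^{-1}=y$), the claim is equivalent by \eqref{aaip} to $a^{-1}(as)=ya$, which in turn follows from \eqref{eql31d}, Proposition~\ref{prop22}(b) and $sa^{-1}=y$.

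Identities \eqref{e215} and \eqref{e216} are of a distributivity type for $R^{-1}_z$ over a product, and they are closely linked: with the relabelling $(x,y,z)\mapsto(x,z,y)$ the right-hand side of \eqref{e215} is precisely the left-hand side of \eqref{e216}. Setting $p=(y)R^{-1}_z$, identity \eqref{e216} is equivalent to the operator statement $R_y\,(R^{-1}_zR_{p^{-1}})=(R^{-1}_zR_{p^{-1}})\,R_y$, which I plan to establish through the automorphism engine, by exhibiting a suitable product of translations fixing $1$ and reading off its action with \eqref{aaip} and Proposition~\ref{prop22}. Identity \eqref{e215} is then obtained by combining this instance of \eqref{e216} with a direct evaluation of its own left-hand side using Lemma~\ref{lema20} and Proposition~\ref{prop22}.

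The main obstacle will be \eqref{e258} and \eqref{e254}, the identities carrying deeply nested inverse right translations and, in the first case, the factor $T^{-1}_x$. For \eqref{e258} the first move is to replace $T^{-1}_x$ by $T_{x^{-1}}$ (Proposition~\ref{prop24}) and to record the simplification $x^{-1}t=yx^{-1}$ for $t=(y)T^{-1}_x$, which disentangles the triple subscripts into the manageable relations $At=x$, $Bt=y$ and $(Ct)x=y$ for $A=(x)R^{-1}_t$, $B=(y)R^{-1}_t$, $C=(y)R^{-1}_xR^{-1}_t$. For \eqref{e254}, writing $a=(x)R^{-1}_{xy}$ and $f=(y)R^{-1}_{xy}$, the target reduces by \eqref{aaip} to $(af^{-1})x^{-1}=a$, to be extracted from the relations $a(xy)=x$ and $f(xy)=y$. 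In both cases the plan is to push all inverses to the outside with \eqref{aaip}, Corollary~\ref{corprop23} and its dual, and then grind the remaining relations against \eqref{eql31d}, \eqref{eql31f}, \eqref{eql31g} and Corollary~\ref{lema32}. The difficulty here is bookkeeping rather than conceptual: keeping the nested division terms under control and applying the inverse rules in the correct order is exactly where the Prover9 assistance mentioned in the paper earns its keep.
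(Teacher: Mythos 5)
Your treatments of \eqref{e177} and \eqref{e245} are sound. For \eqref{e245} your argument (apply the automorphism $T_x$ to $(x)R^{-1}_{xy}\cdot(xy)=x$, then unfold $T_x=L^{-1}_xR_x$ through \eqref{aaip2}) is essentially the paper's; for \eqref{e177} your route through \eqref{eql31c} and \eqref{eql31a}, reducing the claim via \eqref{aaip} to $a^{-1}(as)=ya$ and closing it with \eqref{eql31d} and Proposition~\ref{prop22}(b), is a valid and comparably short alternative to the paper's direct computation. (A minor point: the conversion $(a)L^{-1}_x=((a^{-1})R^{-1}_{x^{-1}})^{-1}$ needed in \eqref{e245} is Corollary~\ref{corprop23} itself, not its dual.)

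The other four identities are where the real content lies, and there the proposal has a genuine gap. For \eqref{e215} and \eqref{e216} you correctly diagnose them as operator identities to be obtained from the principle that a product of translations fixing $1$ is an automorphism, but you never exhibit the inner mapping or the factorisation it acts on, and that choice \emph{is} the proof: the paper applies $R_{((y)R^{-1}_z,z)}=R_{(y)R^{-1}_z}R_zR^{-1}_y$ to $x=(x)R^{-1}_{(y)R^{-1}_z}\cdot (y)R^{-1}_z$ to get \eqref{e215}, and then $R_{((z)R^{-1}_y,y)}$ to $x=(x)R^{-1}_y\cdot y$, evaluating the resulting factors with \eqref{e215}, to get \eqref{e216}; your reversed order, with \eqref{e215} recovered by a ``direct evaluation'' via Lemma~\ref{lema20} and Proposition~\ref{prop22}, is asserted but not substantiated, and those tools alone (which only handle powers of a single element) do not evaluate $((x\cdot (y)R^{-1}_z)z)R^{-1}_y$. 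More seriously, your plan for \eqref{e258} and \eqref{e254} points at the wrong ingredients: \eqref{e258} is exactly the \eqref{aaip}-image of an instance of \eqref{e216} under the substitution $u=(y)R^{-1}_x$, $v=(y)T^{-1}_x$, and \eqref{e254} is extracted from a chain combining \eqref{e215}, \eqref{e216} and \eqref{aaip2}. The single-variable conjugation identities \eqref{eql31d}, \eqref{eql31f}, \eqref{eql31g} do not carry the genuinely two-variable information encoded in \eqref{e216}, and Corollary~\ref{lema32} is a conditional statement whose conclusion is $xy=yx$, so it cannot be used to establish identities that must hold for non-commuting pairs. The missing idea is that \eqref{e258} and \eqref{e254} sit downstream of \eqref{e215} and \eqref{e216}, not of Lemma~\ref{lema31}; without that, the ``grind'' you describe will not close.
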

\begin{proof}
(i) We have that 

\begin{flushleft}
$\begin{array}{lcl}
((x)R^{-1}_y)^{-1}\cdot (x)R^{-1}_{((x)R^{-1}_y)^{-1}} \cdot ((x)R^{-1}_y)^{-1} & = & ((x)R^{-1}_y)^{-1}\cdot x \\
& =&  ((x)R^{-1}_y)^{-1}((x)R^{-1}_y\cdot y)\\
& \stackrel{\eqref{eql31d}}{=}& ( y\cdot (x)R^{-1}_y) ((x)R^{-1}_y)^{-1}.
\end{array}$
\end{flushleft}

Then $((x)R^{-1}_y)^{-1}\cdot (x)R^{-1}_{((x)R^{-1}_y)^{-1}} = y\cdot (x)R^{-1}_y$, and applying (AAIP) we obtain \eqref{e177}.\\

(ii) By Lemma \ref{lema20} (b), we have that $(x)R^{-1}_{xy}T_x = (x)T_xR^{-1}_{(xy)T_x}$. Since $(x)T_x = x$ and $(xy)T_x = yx$, we get $(x)R^{-1}_{xy}T_x = (x)R^{-1}_{yx}$. By \eqref{tx1}, $T_x = L^{-1}_xR_x$, and then $(x)R^{-1}_{xy}L^{-1}_xR_x = (x)R^{-1}_{yx}$. Using \eqref{aaip2} in the left side of this equation, we get

\begin{center}
$(((x)R^{-1}_{xy})^{-1}R^{-1}_{x^{-1}})^{-1}R_x = (x)R^{-1}_{yx}$.
\end{center}

Hence we have \eqref{e245}.\\

(iii) Note that $R_{((y)R^{-1}_z,z)} = R_{(y)R^{-1}_z}R_zR^{-1}_y$ since $y = (y)R^{-1}_z\cdot z$. Applying this automorphism to $x = (x)R^{-1}_{(y)R^{-1}_z}\cdot (y)R^{-1}_z$, we get

\begin{flushleft}
$\begin{array}{lcl}
((x\cdot (y)R^{-1}_z)z)R^{-1}_y & = &(x)R_{(y)R^{-1}_z}R_zR^{-1}_y\\
& =&((x)R^{-1}_{(y)R^{-1}_z})R_{(y)R^{-1}_z}R_zR^{-1}_y \cdot ((y)R^{-1}_z)R_{(y)R^{-1}_z}R_zR^{-1}_y  \\
& =& (x)R_zR^{-1}_y \cdot ((y)R^{-1}_z)R_{(y)R^{-1}_z}R_zR^{-1}_y\\
& =& (x)R_zR^{-1}_y \cdot (((y)R^{-1}_z)^{-1}R_{(y)R^{-1}_z}R_zR^{-1}_y)^{-1}\\
& =& (x)R_zR^{-1}_y \cdot ((1)R_zR^{-1}_y)^{-1}.
\end{array}$
\end{flushleft}

Then we have \eqref{e215}.\\

(iv) Applying the automorphism $R_{((z)R^{-1}_y,y)} = R_{(z)R^{-1}_y}R_yR^{-1}_z$ to $x = (x)R^{-1}_y \cdot y$, we get

\begin{equation}
\label{eqlema34b}
(x)R_{(z)R^{-1}_y}R_yR^{-1}_z = ((x)R^{-1}_y)R_{(z)R^{-1}_y}R_yR^{-1}_z \cdot (y)R_{(z)R^{-1}_y}R_yR^{-1}_z
\end{equation}

By \eqref{e215}, we have

\begin{center}
$(x)R_{(z)R^{-1}_y}R_yR^{-1}_z = (xy)R^{-1}_z \cdot ((y)R^{-1}_z)^{-1}$ and\\
 $((x)R^{-1}_y)R_{(z)R^{-1}_y}R_yR^{-1}_z = ((x)R^{-1}_y\cdot y)R^{-1}_z \cdot ((y)R^{-1}_z)^{-1} = (x)R^{-1}_z \cdot ((y)R^{-1}_z)^{-1}$.
\end{center}

Furthermore, 

\begin{center}
$(y)R_{(z)R^{-1}_y}R_yR^{-1}_z = (y\cdot (z)R^{-1}_y \cdot y)R^{-1}_z = (yz)R^{-1}_z =y$.
\end{center}

Hence \eqref{e216} follows from \eqref{eqlema34b}.\\

(v) From \eqref{e216} we have

\begin{center}
$((u)R^{-1}_v \cdot ((x)R^{-1}_v)^{-1})x =(ux)R^{-1}_v \cdot ((x)R^{-1}_v)^{-1} ,$
\end{center}

and applying (AAIP) we get

\begin{center}
$x^{-1}((x)R^{-1}_v \cdot ((u)R^{-1}_v)^{-1})=(x)R^{-1}_v \cdot ((ux)R^{-1}_v)^{-1}.$
\end{center}

Putting $u = (y)R^{-1}_x$ and $v = (y)T^{-1}_x$ in this equation, we obtain \eqref{e258}.\\

(vi) From \eqref{e215} we have $((u\cdot (y)R^{-1}_x)x)R^{-1}_y =(ux)R^{-1}_y \cdot ((x)R^{-1}_y)^{-1}$. Putting $u = ((y)R^{-1}_x)^{-1}$ in this equation, we get

\begin{center}
$(x)R^{-1}_y =(((y)R^{-1}_x)^{-1}\cdot x)R^{-1}_y \cdot ((x)R^{-1}_y)^{-1}$
\end{center}

\begin{flushleft}
$\begin{array}{cl}
\stackrel{\eqref{e216}}{\implies} &(x)R^{-1}_y = (((y)R^{-1}_x)^{-1}R^{-1}_y \cdot ((x)R^{-1}_y)^{-1})R_x\\
\implies & (x)R^{-1}_yR^{-1}_x = ((y)R^{-1}_x)^{-1}R^{-1}_y \cdot ((x)R^{-1}_y)^{-1}\\
\implies & ((x)R^{-1}_yR^{-1}_x)L^{-1}_{((y)R^{-1}_x)^{-1}R^{-1}_y} =  ((x)R^{-1}_y)^{-1}\\
\implies & (((x)R^{-1}_yR^{-1}_x)L^{-1}_{((y)R^{-1}_x)^{-1}R^{-1}_y})^{-1} =  (x)R^{-1}_y\\
\stackrel{\eqref{aaip2}}{\implies} & ((x)R^{-1}_yR^{-1}_x)^{-1}R^{-1}_{(((y)R^{-1}_x)^{-1}R^{-1}_y)^{-1}} = (x)R^{-1}_y.
\end{array}$
\end{flushleft}

Putting $x = wy$ in this equation and noticing that $(wy)R^{-1}_y = w$, we have

\begin{equation}
\label{e251}
w = ((w)R^{-1}_{wy})^{-1}R^{-1}_{(((y)R^{-1}_{wy})^{-1}R^{-1}_y)^{-1}}.
\end{equation}

From \eqref{e216} we have $(wy)R^{-1}_{wy} \cdot ((y)R^{-1}_{wy})^{-1} = ((w)R^{-1}_{wy} \cdot ((y)R^{-1}_{wy})^{-1})R_y$. Since $(wy)R^{-1}_{wy} = 1$, it follows that

\begin{center}
$((y)R^{-1}_{wy})^{-1}R^{-1}_y = (w)R^{-1}_{wy} \cdot ((y)R^{-1}_{wy})^{-1}$,
\end{center}

and using (AAIP) we get

\begin{center}
$(((y)R^{-1}_{wy})^{-1}R^{-1}_y)^{-1} = (y)R^{-1}_{wy} \cdot ((w)R^{-1}_{wy})^{-1}$.
\end{center}

Putting the above equation in \eqref{e251}, we obtain \eqref{e254}.
\end{proof}

\begin{prop}
\label{prop30}
In an automorphic loop $Q$, the following identity holds:

\begin{equation}
\label{e266}
(x)R^{-1}_{(x)T^{-1}_y}\cdot y = (y)T^{-1}_x
\end{equation}

\end{prop}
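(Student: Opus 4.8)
The plan is to translate the operator identity \eqref{e266} into a statement about elements and then reduce it to a short quasigroup computation that the identities of Lemmas~\ref{lema31} and~\ref{lema34} are designed to feed.

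First I would use Proposition~\ref{prop24} to write $(x)T^{-1}_y=(x)T_{y^{-1}}$ and $(y)T^{-1}_x=(y)T_{x^{-1}}$, and then unwind the definition $T_z=R_zL^{-1}_z$ together with Lemma~\ref{lema20}(e) to obtain clean element-level characterizations. Setting $a:=(x)T^{-1}_y$, one has $(a)T_y=x$, i.e.\ $ay=yx$; likewise $d:=(y)T^{-1}_x$ satisfies $dx=xy$. Writing $w:=(x)R^{-1}_a$, so that $wa=x$, the left-hand side of \eqref{e266} is exactly $wy$, and since $dx=xy$ the whole proposition collapses to a single cancellation identity: \emph{if $ay=yx$ and $wa=x$, then $(wy)x=xy$}.

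Second, eliminating $x$ through $x=wa$, this reads $(wy)(wa)=(wa)y$ under the constraint $ay=y(wa)$, a relation among $a,y,w$ alone. I would establish it by transporting the inverse translations through the machinery already available: rewrite $w=(x)R^{-1}_a$ and use \eqref{eql31d} in its operator form $R_xR_{x^{-1}}=L_xL_{x^{-1}}$, the \eqref{aaip} bridge \eqref{aaip2} that converts $L^{-1}$-blocks into $R^{-1}$-blocks, and the nested-$R^{-1}$ identities \eqref{e258} and \eqref{e254} of Lemma~\ref{lema34}, whose left-hand sides already exhibit the blocks $R^{-1}_{(y)T^{-1}_x}$ and $((x)R^{-1}_{xy})^{-1}$ that arise here after a suitable specialization of the free variables. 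Proposition~\ref{prop22}(d) is also on hand to treat the symmetric product $(wy)(wa)$ when collapsing the intermediate expressions.

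The main obstacle is that the tempting structural shortcut is circular. Rewriting the goal as the equality of the inner mappings $R_{(a,y)}=R_aR_yR^{-1}_{yx}$ and $R_{(y,x)}=R_yR_xR^{-1}_{yx}$ evaluated at the single point $w$ leads, after substituting $wa=x$, straight back to $(wy)x=xy$, so there is no reduction to an identity of maps (indeed $R_aR_y=R_yR_x$ fails as an identity of maps in general). Hence one cannot avoid genuinely moving the several layers of $R^{-1}$ and $(\cdot)^{-1}$ through the automorphic identities, and the delicate part is the bookkeeping of these nested inverse-translation terms — precisely the step for which Prover9 was used. The human argument then amounts to choosing the right specializations of \eqref{e258} and \eqref{e254} and collapsing them against \eqref{eql31d} and \eqref{aaip}.
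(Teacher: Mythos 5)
Your opening reduction is correct and is a clean way to see what \eqref{e266} says at the level of elements: with $a=(x)T^{-1}_y$, $w=(x)R^{-1}_a$ and $d=(y)T^{-1}_x$ one indeed has $ay=yx$, $wa=x$, $dx=xy$, and the proposition is exactly the implication ``$ay=yx$ and $wa=x$ imply $(wy)x=xy$''. Your remark that recasting this as an equality of inner mappings is circular is also fair. But at that point the proposal stops where the actual work begins. The statement $(wy)(wa)=(wa)y$ under the constraint $ay=y(wa)$ is not any easier than \eqref{e266} itself --- it is the same nontrivial automorphic-loop identity in different clothing --- and you do not derive it. You only list candidate tools (``suitable specializations'' of \eqref{e258} and \eqref{e254}, collapsed against \eqref{eql31d} and \eqref{aaip2}) without exhibiting the specializations or the chain of substitutions, and you explicitly defer the ``bookkeeping'' to the step ``for which Prover9 was used''. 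That bookkeeping \emph{is} the proof; a proposal that names the lemmas but not how they combine has not established the identity.

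Concretely, the paper's argument is seeded by \eqref{e177} with the substitution $u=(x)R^{-1}_{xy}$, $v=(y)R^{-1}_{xy}\cdot((x)R^{-1}_{xy})^{-1}$, which is legitimate precisely because \eqref{e254} guarantees $(u)R^{-1}_v=x$; it is then transported through \eqref{e245}, (AAIP), \eqref{e258} and two applications of \eqref{e216}, and finally unwound via Lemma \ref{lema20}(b), \eqref{aaip2} and \eqref{tx2}. Your plan never mentions \eqref{e177}, \eqref{e245} or \eqref{e216}, which carry most of the load, while it does invoke Proposition \ref{prop22}(d), which plays no role. So the proposal should be read as a correct reformulation plus an unexecuted search strategy rather than a proof: to complete it you would need either to reproduce a derivation of the above kind or to give a genuinely new route to $(wy)x=xy$, and nothing in the text indicates such a route.
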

\begin{proof} From \eqref{e177} we have $((u)R^{-1}_{((u)R^{-1}_v)^{-1}})^{-1}\cdot (u)R^{-1}_v = ((u)R^{-1}_v)^{-1}\cdot v^{-1}$. If $u = (x)R^{-1}_{xy}$ and $v = (y)R^{-1}_{xy} \cdot ((x)R^{-1}_{xy})^{-1}$, then $(u)R^{-1}_v = x$ by \eqref{e254}, and we get

\begin{center}
$ (((x)R^{-1}_{xy})^{-1}R^{-1}_{x^{-1}})^{-1} \cdot x = x^{-1} \cdot ((y)R^{-1}_{xy} \cdot ((x)R^{-1}_{xy})^{-1})^{-1}$
\end{center}

Using \eqref{e245} in the left side and (AAIP) in the right side of this equation, we obtain

\begin{center}
$(x)R^{-1}_{yx}=x^{-1}((x)R^{-1}_{xy} \cdot ((y)R^{-1}_{xy})^{-1})$.
\end{center}

Putting $w = yx$ in the above equation and noticing that $y = (w)R^{-1}_x$,  we get

\begin{center}
$(x)R^{-1}_{w}=x^{-1}((x)R^{-1}_{x\cdot (w)R^{-1}_x} \cdot ((w)R^{-1}_xR^{-1}_{x\cdot(w)R^{-1}_x})^{-1})$.
\end{center}

Since $(w)T^{-1}_x = x\cdot (w)R^{-1}_x$, from \eqref{e258} we have

\begin{center}
$(x)R^{-1}_{w} = (x)R^{-1}_{(w)T^{-1}_x}\cdot ((w)R^{-1}_{(w)T^{-1}_x})^{-1}$,
\end{center}

and then

\begin{equation}
\label{e259}
x = ((x)R^{-1}_{(w)T^{-1}_x}\cdot ((w)R^{-1}_{(w)T^{-1}_x})^{-1})R_w.
\end{equation}

Putting $z = (w)T^{-1}_x$ in \eqref{e216}, we have 

\begin{center}
$(xw)R^{-1}_{(w)T^{-1}_x} \cdot ((w)R^{-1}_{(w)T^{-1}_x})^{-1} = ((x)R^{-1}_{(w)T^{-1}_x} \cdot ((w)R^{-1}_{(w)T^{-1}_x})^{-1})R_w$,
\end{center}

and putting this in \eqref{e259}, we obtain

\begin{center}
$x = (xw)R^{-1}_{(w)T^{-1}_x} \cdot ((w)R^{-1}_{(w)T^{-1}_x})^{-1}$
\end{center}

\begin{flushleft}
$\begin{array}{cl}
\stackrel{\textrm{Lemma }\ref{lema20} (b)}{\implies}& x = ((xw)T_xR^{-1}_{w} \cdot ((w)T_xR^{-1}_{w})^{-1})T^{-1}_x  \\

\stackrel{(x)T_x=x}{\implies}&x = (xw)T_xR^{-1}_{w} \cdot ((w)T_xR^{-1}_{w})^{-1} \\

\stackrel{(xw)T_x=wx}{\implies}&x = (wx)R^{-1}_{w} \cdot ((w)T_xR^{-1}_{w})^{-1} \\

\implies&x = (x)T^{-1}_{w} \cdot ((w)T_xR^{-1}_{w})^{-1} \\

\implies&(x)L^{-1}_{(x)T^{-1}_{w}} = ((w)T_xR^{-1}_{w})^{-1} \\

\stackrel{\eqref{aaip2}}{\implies}& (x^{-1})R^{-1}_{(x^{-1})T^{-1}_{w}} = (w)T_xR^{-1}_{w}\\

\implies& (x^{-1})R^{-1}_{(x^{-1})T^{-1}_{w}}R_w = (w)T_x\\

\stackrel{\eqref{tx2}}{\implies}& (x^{-1})R^{-1}_{(x^{-1})T^{-1}_{w}}R_w = (w)T^{-1}_{x^{-1}}
\end{array}$
\end{flushleft}
\end{proof}

Let $Q$ be an automorphic loop. For $x,y\in Q$, consider the following condition:

\begin{equation}
\label{co1}
x(xy) = (yx)x \textrm{ if and only if } xy = yx.
\end{equation}

By \eqref{tx1}, we have $T^2_x = R^2_xL^{-2}_x$, and then the above condition  can be rewritten as:

\begin{equation}
\label{co2}
(y)T^2_x = y \textrm{ if and only if } (y)T_x = y.
\end{equation}

\begin{obs}
\label{obs31}
In flexible loops (and therefore in automorphic loops), $xy = yx$ implies that $x(xy) = (yx)x$.
\end{obs}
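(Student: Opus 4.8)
The plan is to reduce the claim to the flexible law together with two applications of the hypothesis. First I would justify the parenthetical assertion that every automorphic loop is flexible. Using the paper's right-action convention, $x(yx) = (y)R_xL_x$ and $(xy)x = (y)L_xR_x$, so the flexible identity $x(yx) = (xy)x$ holds for all $y$ precisely when $R_xL_x = L_xR_x$, that is, when $L_x$ and $R_x$ commute. This commuting is exactly the case $m = n = 1$ of Corollary \ref{cor22}, which asserts that $\langle L_{x^m}, R_{x^n} \mid m,n\in\mathbb{Z}\rangle$ is an abelian group. Hence every automorphic loop is flexible, which legitimizes the ``and therefore in automorphic loops'' in the statement.

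Granting flexibility, the implication follows from a short chain of substitutions. Assume $xy = yx$. Rewriting the inner factor with the hypothesis gives $x(xy) = x(yx)$; the flexible law then gives $x(yx) = (xy)x$; and one more use of $xy = yx$ gives $(xy)x = (yx)x$. Composing these three equalities yields $x(xy) = (yx)x$, which is the desired conclusion.

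I do not expect any genuine obstacle here, since the entire content is the single fact that $L_x$ and $R_x$ commute in an automorphic loop, already recorded in Corollary \ref{cor22}. The only step worth spelling out explicitly is the translation of the flexible law into the statement that these two translations commute; everything else is a purely formal manipulation. It is worth noting that this remark supplies one direction of the equivalence in condition \eqref{co1}, so that the substance of the later results will lie entirely in establishing the reverse implication.
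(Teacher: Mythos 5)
Your argument is correct: the chain $x(xy) = x(yx) = (xy)x = (yx)x$, using flexibility in the middle step, is exactly the intended justification, and your derivation of flexibility for automorphic loops from Corollary \ref{cor22} (equivalently, from Proposition \ref{prop22}(c) with $m=n=1$) is sound. The paper states this remark without proof, and your proposal supplies the same straightforward argument the authors evidently had in mind.
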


As a consequence of \eqref{co2} and Corollary \ref{cor21}, we have the following result.

\begin{lema}
\label{lema33} Let $Q$ be an automorphic loop satisfying \eqref{co1} and let $x,y\in Q$.  If $(y)T^2_x = y$, then $\langle x,y\rangle$ is commutative.
\end{lema}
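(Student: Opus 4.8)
The plan is to reduce the statement entirely to the equivalence already recorded as \eqref{co2} together with the generation result in Corollary \ref{cor21}; no new manipulation of the identities in Lemma \ref{lema31} or Lemma \ref{lema34} should be needed. The starting observation is that the hypothesis $(y)T^2_x = y$ is precisely the left-hand condition appearing in \eqref{co2}. Since $Q$ satisfies \eqref{co1}, and \eqref{co1} was rewritten as \eqref{co2} using $T^2_x = R^2_xL^{-2}_x$ from \eqref{tx1}, the loop $Q$ satisfies \eqref{co2}; applying it to the given pair $x,y$ yields $(y)T_x = y$.

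The second step is to translate this fixed-point statement back into a commutativity statement. By Lemma \ref{lema20}(e), $(y)T_x = y$ holds if and only if $xy = yx$. Hence $x$ and $y$ commute.

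Finally, to pass from the commuting pair to the generated subloop, I would observe that the two-element set $\{x,y\}$ is a commutative subset of $Q$: the only nontrivial pair to check is $(x,y)$, and $xy = yx$ has just been established. Since $\langle x,y\rangle = \langle \{x,y\}\rangle$, Corollary \ref{cor21} applies directly and gives that $\langle x,y\rangle$ is commutative, which completes the argument.

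I do not expect a genuine obstacle here, since the mathematical content lives entirely in the preceding results and this lemma merely packages \eqref{co2} and Corollary \ref{cor21} into a form convenient for later use. The one point deserving a moment of care is confirming that $\{x,y\}$ really qualifies as a commutative subset in the sense required by Corollary \ref{cor21}, but this is immediate once $xy = yx$ is in hand.
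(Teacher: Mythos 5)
Your proof is correct and coincides with the paper's own (implicit) argument: the paper presents Lemma \ref{lema33} as a direct consequence of \eqref{co2} and Corollary \ref{cor21}, which is exactly the chain you carry out ($(y)T^2_x=y \Rightarrow (y)T_x=y$ by \eqref{co2}, then $xy=yx$ by Lemma \ref{lema20}(e), then commutativity of $\langle x,y\rangle$ by Corollary \ref{cor21}). Nothing further is needed.
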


\begin{cor}
\label{cor31} Let $Q$ be an automorphic loop satisfying \eqref{co1} and let $x,y\in Q$.  If $xyx^{-1} = x^{-1}yx$, then $\langle x,y\rangle$ is commutative.
\end{cor}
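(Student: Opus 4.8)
The plan is to reduce the hypothesis $xyx^{-1} = x^{-1}yx$ to the fixed-point condition $(y)T^2_x = y$ that appears in Lemma \ref{lema33}, and then invoke that lemma directly. So the whole proof amounts to an identity between compositions of translations by powers of $x$.

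First I would rewrite each side of the hypothesis as the image of $y$ under a product of left and right translations by powers of $x$. Using the convention justified by Proposition \ref{prop22}, I have $x^{-1}yx = x^{-1}(yx) = (y)R_xL_{x^{-1}}$ and $xyx^{-1} = x(yx^{-1}) = (y)R_{x^{-1}}L_x$. Hence the hypothesis says precisely that $(y)R_{x^{-1}}L_x = (y)R_xL_{x^{-1}}$, which I rearrange to $(y)R_{x^{-1}}L_xL^{-1}_{x^{-1}}R^{-1}_x = y$.

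The key step is to simplify the composite $R_{x^{-1}}L_xL^{-1}_{x^{-1}}R^{-1}_x$. By Corollary \ref{cor22} the translations $R_{x^n}$ and $L_{x^m}$ (and their inverses) all lie in a single abelian group, so I may freely reorder the four factors. Grouping them as $(R_{x^{-1}}L^{-1}_{x^{-1}})(L_xR^{-1}_x)$ and recognizing $R_{x^{-1}}L^{-1}_{x^{-1}} = T_{x^{-1}}$ (definition of $T$) together with $L_xR^{-1}_x = T^{-1}_x$, the product becomes $T_{x^{-1}}T^{-1}_x$. Now \eqref{tx2} gives $T_{x^{-1}} = T^{-1}_x$, so the composite map equals $T^{-2}_x$. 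Thus the hypothesis is equivalent to $(y)T^{-2}_x = y$, equivalently (since $T^2_x$ is a bijection) to $(y)T^2_x = y$. Finally, as $Q$ satisfies \eqref{co1}, Lemma \ref{lema33} applies to the pair $x,y$ and yields that $\langle x,y\rangle$ is commutative.

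The main obstacle, and the reason one cannot simply assert $x^{-1}yx = (y)T_x$, is that automorphic loops need not have the inverse property (indeed \eqref{eql31d} would be trivial otherwise), so $L_{x^{-1}} \neq L^{-1}_x$ in general and the conjugation maps are genuinely $R_xL_{x^{-1}}$ and $R_{x^{-1}}L_x$ rather than $T_x$ or $T_{x^{-1}}$. What rescues the argument is exactly the commutativity of the translation group from Corollary \ref{cor22}, which lets the honest combination $R_{x^{-1}}L_xL^{-1}_{x^{-1}}R^{-1}_x$ collapse to a power of $T_x$; keeping careful track of which factor is $R_{x^{-1}}$ versus $R^{-1}_x$ is the only place where one must be attentive.
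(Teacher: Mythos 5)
Your proof is correct and follows essentially the same route as the paper's: both translate the hypothesis into $(y)R_{x^{-1}}L_x = (y)R_xL_{x^{-1}}$, use the commutativity of translations by powers of $x$ (Corollary \ref{cor22}) together with \eqref{tx2} to reduce this to $(y)T^2_x = y$, and then invoke Lemma \ref{lema33}. The only difference is cosmetic bookkeeping (you collapse the composite to $T^{-2}_x$ in one step, the paper first derives $(y)T_{x^{-1}} = (y)T_x$), and your closing remark about the absence of the inverse property correctly identifies the one point requiring care.
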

\begin{proof} By assumption, $(y) R_{x^{-1}}L_x= (y)R_{x} L_{x^{-1}}$, and then $(y)R_{x^{-1}} L_{x^{-1}}^{-1}= (y)R_{x} L_x^{-1}$ by Corollary \ref{cor22}. Thus $(y)T_{x^{-1}} = (y)T_x$, and hence $(y)T^2_x = y$ by  \eqref{tx2}. Therefore the claim follows from Lemma \ref{lema33}.
\end{proof}

\begin{teo}
\label{teo31}
Let $Q$ be an automorphic loop and $x,y\in Q$. Then

\begin{center}
$x(xy) = (yx)x$ if and only if  $x^2y = yx^2.$
\end{center}

\end{teo}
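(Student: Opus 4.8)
The plan is to translate both equations into the language of the inner mapping $T_x$. Since $x(xy)=(y)L_x^2$ and $(yx)x=(y)R_x^2$, and $T_x^2=R_x^2L_x^{-2}$ by \eqref{tx1}, the left-hand equation $x(xy)=(yx)x$ is equivalent to $(y)R_x^2=(y)L_x^2$, that is, to $(y)T_x^2=y$; this is the rewriting already recorded in passing from \eqref{co1} to \eqref{co2}. By Lemma \ref{lema20}(e), the right-hand equation $x^2y=yx^2$ is equivalent to $(y)T_{x^2}=y$. Hence the statement to prove is exactly that $T_x^2$ and $T_{x^2}$ have the same fixed points.

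Next I would compare $T_x^2=R_x^2L_x^{-2}$ with $T_{x^2}=R_{x^2}L_{x^2}^{-1}$ by means of the inner mappings $R_{(x,x)}=R_x^2R_{x^2}^{-1}$ and $L_{(x,x)}=L_x^2L_{x^2}^{-1}$, which together with all the $L_{x^m},R_{x^n}$ commute by Corollary \ref{cor22}. Unwinding these definitions gives the identities $(yx)x=(y)R_{(x,x)}\cdot x^2$ and $x(xy)=x^2\cdot(y)L_{(x,x)}$, valid for all $x,y$. Consequently the left-hand equation $x(xy)=(yx)x$ is equivalent to $x^2\cdot(y)L_{(x,x)}=(y)R_{(x,x)}\cdot x^2$, and the whole problem is reduced to controlling how the two inner automorphisms $R_{(x,x)}$ and $L_{(x,x)}$ behave relative to commutation by $x^2$.

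The crux is the identity $R_{(x,x)}=L_{(x,x)}$, equivalently $T_x^2=T_{x^2}$. Granting it, the proof closes at once: $R_{(x,x)}$ is an automorphism fixing $x^2$, so applying $R_{(x,x)}^{-1}$ turns $x^2\cdot(y)R_{(x,x)}=(y)R_{(x,x)}\cdot x^2$ into $x^2y=yx^2$, and conversely; together with the recasting of the previous paragraph this yields both implications simultaneously. I expect $T_x^2=T_{x^2}$ to be the main obstacle, precisely because automorphic loops are in general neither diassociative nor alternative, so that none of $R_x^2=R_{x^2}$, $L_x^2=L_{x^2}$, or $T_x^2=T_{x^2}$ is automatic. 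To establish it I would pass to the right-division/$T^{-1}$ calculus developed in Lemma \ref{lema34}, rewrite the desired equality of inner maps as an identity between $R^{-1}$-expressions, and then invoke the symmetric identity $(x)R^{-1}_{(x)T^{-1}_y}\cdot y=(y)T^{-1}_x$ of Proposition \ref{prop30} together with \eqref{e254} and \eqref{e216} to reduce everything to the relations already proved. This last identity-chase is the computational heart of the argument.
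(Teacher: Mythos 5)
Your reformulation is correct and matches the paper's own starting point: $x(xy)=(yx)x$ is $(y)T_x^2=y$ and, by Lemma \ref{lema20}(e), $x^2y=yx^2$ is $(y)T_{x^2}=y$, so the theorem asserts that the permutations $T_x^2$ and $T_{x^2}$ have the same fixed points. The problem is that you then reduce this to the \emph{operator identity} $T_x^2=T_{x^2}$ (equivalently $R_{(x,x)}=L_{(x,x)}$), which is strictly stronger --- two automorphisms can have identical fixed-point sets without being equal --- and you never prove it. You yourself flag it as ``the main obstacle'' and ``the computational heart of the argument'' and then only gesture at Lemma \ref{lema34}, Proposition \ref{prop30}, \eqref{e254} and \eqref{e216} as tools that should close it. That unexecuted identity-chase is exactly the content of the theorem, so as written the proposal has a genuine gap at its central step; the surrounding observations (that $R_{(x,x)}$ is an automorphism fixing $x^2$, etc.) are correct but do all their work only after the missing identity is granted.

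There is moreover reason to doubt that the stronger identity is a consequence of the paper's machinery at all. The paper never establishes $T_x^2=T_{x^2}$; it proves only the fixed-point equivalence, by a chain of conditions on the particular pair $(x,y)$: $x^2y=yx^2 \iff ((x)T_y^{-1})^2=x^2$, which is unpacked via \eqref{eql31c} and (AAIP) into $(x^{-1})R^{-1}_{(x^{-1})T^{-1}_y}=(x)R^{-1}_{(x)T^{-1}_y}$, and then converted by the identity \eqref{e266} of Proposition \ref{prop30} into $(y)T^{-1}_{x^{-1}}=(y)T^{-1}_x$, i.e.\ $(y)T_x^2=y$. Note that the element $y$ is threaded through every step (the chain begins by applying $T_y^{-1}$ to $x^2$); it is not a manipulation of the maps $T_x^2$ and $T_{x^2}$ as such. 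To salvage your plan you would have to actually derive $R_{(x,x)}=L_{(x,x)}$ in all automorphic loops --- and be prepared for the possibility that it fails: automorphic loops are neither left nor right alternative, and nothing in Corollary \ref{cor22} or Lemma \ref{lema34} compares $R_{(x,x)}$ with $L_{(x,x)}$ --- or else abandon the reduction to an operator identity and argue the fixed-point equivalence directly for the given $y$, as the paper does.
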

\begin{proof} We have that

\begin{flushleft}
$\begin{array}{lclcl}
x^2y = yx^2 &\iff& (x^2)T^{-1}_y = x^2& \iff&((x)T^{-1}_y)^2 = x^2 \\

&\iff& (x)T^{-1}_y = (x^2)R^{-1}_{(x)T^{-1}_y}& \stackrel{\eqref{eql31c}}{\iff}& (x)T^{-1}_y = (x)R^{-1}_{((x)R^{-1}_{(x)T^{-1}_y})^{-1}} \\

&\iff& x = ((x)T^{-1}_y)((x)R^{-1}_{(x)T^{-1}_y})^{-1}& \stackrel{\textrm{(AAIP)}}{\iff}& x^{-1} = ((x)R^{-1}_{(x)T^{-1}_y})((x)T^{-1}_y)^{-1} \\

&\iff& x^{-1} = ((x)R^{-1}_{(x)T^{-1}_y})((x^{-1})T^{-1}_y) &\iff&(x^{-1})R^{-1}_{(x^{-1})T^{-1}_y} = (x)R^{-1}_{(x)T^{-1}_y}\\

&\iff&(x^{-1})R^{-1}_{(x^{-1})T^{-1}_y} y = (x)R^{-1}_{(x)T^{-1}_y} y& \stackrel{\eqref{e266}}{\iff}& (y)T^{-1}_{x^{-1}} = (y)T^{-1}_x\\

&\iff& (y)T^{-1}_{x^{-1}}T_x = y & \stackrel{\eqref{tx2}}{\iff}& (y)T^{2}_x = y

\end{array}$
\end{flushleft}
\end{proof}

\begin{cor}
\label{cor32}
If $Q$ is an uniquely $2$-divisible automorphic loop, then it satisfies \eqref{co1}.
\end{cor}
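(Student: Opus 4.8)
The plan is to treat the two implications of \eqref{co1} separately. One of them is immediate and needs no hypothesis beyond flexibility: by Remark \ref{obs31}, $xy = yx$ already forces $x(xy) = (yx)x$ in any automorphic loop. So the entire content lies in the converse, and the strategy is to reduce it, through Theorem \ref{teo31}, to the statement ``if $x^2$ commutes with $y$ then $x$ commutes with $y$,'' which is where unique $2$-divisibility will do the work.

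First I would assume $x(xy) = (yx)x$ and apply Theorem \ref{teo31} to rewrite this at once as $x^2 y = y x^2$. The crucial reformulation is to read this as a fixed-point statement for an inner mapping: by Lemma \ref{lema20}(e), translating by $y$, the commuting relation $y x^2 = x^2 y$ is exactly $(x^2)T_y = x^2$. The point of writing it in the $T_y$-form, rather than the $T_{x^2}$-form, is that $Q$ being automorphic gives $T_y \in Inn(Q) \subseteq Aut(Q)$, so $T_y$ is an honest automorphism of $Q$ that fixes $x^2$.

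The key step, and the only place $2$-divisibility is used, is to take square roots of this fixed-point equation. Since $\gamma : z \mapsto z^2$ is a bijection we have $x = (x^2)^{1/2}$, and applying \eqref{x12} with $\varphi = T_y$ to the element $x^2$ yields $(x)T_y = \bigl((x^2)^{1/2}\bigr)T_y = \bigl((x^2)T_y\bigr)^{1/2} = (x^2)^{1/2} = x$, where the third equality is the fixed-point relation just obtained. Thus $T_y$ fixes $x$ as well, and Lemma \ref{lema20}(e) turns $(x)T_y = x$ back into $xy = yx$, which completes the converse and establishes \eqref{co1}.

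I do not anticipate a real obstacle in this argument; the delicate points are purely organizational. One must keep careful track of which factor is being translated when invoking Lemma \ref{lema20}(e), so as to land on the $T_y$-form $(x^2)T_y = x^2$, and one must apply the square-root identity \eqref{x12} to $x^2$ rather than to $y$. It is worth stressing that this route relies only on $2$-divisibility together with the automorphism property, so it applies verbatim to infinite loops; the tempting shortcut of passing to the commutative subloop $\langle x^2, y\rangle$ and arguing $x \in \langle x^2\rangle$ would break down in the infinite (for instance infinite-cyclic) case, which is exactly why anchoring the proof on \eqref{x12} is the right choice.
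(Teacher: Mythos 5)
Your proposal is correct and follows essentially the same route as the paper: reduce via Theorem \ref{teo31} to $x^2y = yx^2$, rewrite this as the fixed-point equation $(x^2)T_y = x^2$, and use $T_y \in Aut(Q)$ together with \eqref{x12} to conclude $(x)T_y = x$, hence $xy = yx$. The paper's proof is exactly this argument (with the easy direction left to Remark \ref{obs31}), so no further comment is needed.
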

\begin{proof} Let $x,y\in Q$ be such that $x(xy) = (yx)x$.  By Theorem \ref{teo31}, we have that $x^2y = yx^2$, and then $(x^2)T_y = x^2$. Since $T_y\in Aut(Q)$, it follows that $(x)T_y = x$ by \eqref{x12}.
\end{proof}

The next result is immediate from Proposition \ref{prop25} and Corollary \ref{cor32}.

\begin{cor}
\label{cor33}
Every automorphic loop of odd order satisfies \eqref{co1}.
\end{cor}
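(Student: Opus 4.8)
The plan is to chain together the two preceding results, since the substantive content has already been established upstream. First I would observe that an automorphic loop $Q$ of odd order is in particular a \emph{finite} automorphic loop, so the hypothesis of Proposition~\ref{prop25} is met. Applying that proposition in the direction ``odd order implies uniquely $2$-divisible'' yields that the squaring map $\gamma:Q\to Q$, $(x)\gamma=x^2$, is a bijection, i.e.\ $Q$ is uniquely $2$-divisible.

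Next I would feed this conclusion directly into Corollary~\ref{cor32}: having shown $Q$ to be uniquely $2$-divisible, that corollary immediately gives that $Q$ satisfies condition~\eqref{co1}, namely that $x(xy)=(yx)x$ holds if and only if $xy=yx$. Since the output of Proposition~\ref{prop25} (unique $2$-divisibility) is precisely the input hypothesis of Corollary~\ref{cor32}, nothing further is required and the argument closes.

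I do not expect any genuine obstacle at this stage, because all of the real difficulty has been front-loaded into the earlier work. The nontrivial direction of \eqref{co1}, that $x(xy)=(yx)x$ forces $xy=yx$, rests on Theorem~\ref{teo31}, which reduces the hypothesis to $x^2y=yx^2$ (equivalently $(x^2)T_y=x^2$), combined with the unique $2$-divisibility invoked in Corollary~\ref{cor32} to pass from $(x^2)T_y=x^2$ back to $(x)T_y=x$ through the automorphism identity~\eqref{x12}. The converse direction is automatic in any flexible loop by Remark~\ref{obs31}. Thus for this final corollary the only remaining content is checking that the finiteness and odd-order hypotheses trigger Proposition~\ref{prop25} and that its conclusion matches what Corollary~\ref{cor32} demands, both of which are immediate; consequently the proof is a one-line composition of the two cited statements.
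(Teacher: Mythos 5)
Your proof is correct and matches the paper exactly: the paper states that this corollary is immediate from Proposition~\ref{prop25} and Corollary~\ref{cor32}, which is precisely the two-step composition you give. No issues.
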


It is clear that every commutative automorphic loop satisfies \eqref{co1} and the direct product of two automorphic loops that satisfy \eqref{co1} also satisfies this condition. Then there are also non-commutative automorphic loops of even whose satisfy \eqref{co1}.

\section{Main results}

In this section, $(Q,*)$ and $(Q',\cdot)$ are automorphic loops, and $f:(Q,*)\to (Q',\cdot)$ is a half-isomorphism. Furthermore, we consider that $Q'$  satisfies \eqref{co1}, that is, for every $x,y\in Q'$ we have

\begin{center}
$x\cdot(x\cdot y) = (y\cdot x)\cdot x \textrm{ if and only if } x\cdot y = y\cdot x.$
\end{center}

\begin{lema}
\label{lema41} Let $x,y\in Q$ be such that $f(x*y) = f(x)\cdot f(y)$ and $f(y*x^{-1}) = f(x^{-1})\cdot f(y)$. Then $f(x)\cdot f(y) = f(y)\cdot f(x)$.
\end{lema}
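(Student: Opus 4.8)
The plan is to reduce the required commutativity to the criterion of Corollary~\ref{lema32} and then to supply its hypothesis by evaluating $f$ on a single well-chosen product. Write $a=f(x)$ and $b=f(y)$; since automorphic loops are power associative, Proposition~\ref{prop27} gives $f(x^{-1})=a^{-1}$, so the two assumptions read $f(x*y)=ab$ and $f(y*x^{-1})=a^{-1}b$. The goal $ab=ba$ is, by condition~\eqref{co1} in $Q'$, equivalent to $a\cdot(a\cdot b)=(b\cdot a)\cdot a$; more to the point, the two products $(ab)(a^{-1}b)$ and $(a^{-1}b)(ab)$ that can be built from the images $ab$ and $a^{-1}b$ are exactly the two right-hand sides occurring in the hypotheses of Corollary~\ref{lema32}. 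This coincidence is what the argument should exploit.

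Concretely, I would apply $f$ to $M=(x*y)*(y*x^{-1})$ and compute it in two ways. First, identity~\eqref{eql31b} read in $Q$ gives $M=w*w$ with $w=x*(y*x^{-1})$, so by Proposition~\ref{prop27} the image is a square, $f(M)=f(w)^2$; and the half-isomorphism property applied to $w=x*(y*x^{-1})$, using the second assumption, forces $f(w)\in\{a(a^{-1}b),\,(a^{-1}b)a\}$. Second, applying the half-isomorphism directly to $M$ and using both assumptions gives $f(M)\in\{(ab)(a^{-1}b),\,(a^{-1}b)(ab)\}$. The bridge between the two computations is furnished by the one-variable identities of the preceding section, now read in $Q'$: by Proposition~\ref{prop22}(a) and \eqref{eql31d} one has $a(a^{-1}b)=(ba)a^{-1}$, and by Proposition~\ref{prop22}(c) one has $(a^{-1}b)a=a^{-1}ba$. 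Squaring these with the help of the automorphism $R_aR_{a^{-1}}$ together with \eqref{eql31d} and \eqref{eql31b} yields, in the branch $f(w)=a(a^{-1}b)$, the value $f(M)=a^{-1}(ab^2)$ --- precisely the left-hand side of the hypotheses of Corollary~\ref{lema32} --- while in the branch $f(w)=(a^{-1}b)a$ it yields $f(M)=(a^{-1}b)(ba)$.

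It then remains to equate the ``square'' value of $f(M)$ with its ``half-isomorphism'' value and read off commutativity. In the branch $f(w)=a(a^{-1}b)$ this is immediate: the equation $a^{-1}(ab^2)\in\{(ab)(a^{-1}b),(a^{-1}b)(ab)\}$ is exactly hypothesis (a) or (b) of Corollary~\ref{lema32}, so $ab=ba$. In the branch $f(w)=(a^{-1}b)a$, equating $(a^{-1}b)(ba)$ with $(a^{-1}b)(ab)$ lets one cancel the left translation $L_{a^{-1}b}$ and finish, whereas equating it with $(ab)(a^{-1}b)$ leaves the single residual identity $(a^{-1}b)(ba)=(ab)(a^{-1}b)$. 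I expect this ``crossed'' case to be the main obstacle: it is not literally one of the forms appearing in Corollary~\ref{lema32}, and closing it should require one further passage through the identities of the previous section --- or a second evaluation of $f$ on a companion product --- to reduce it to the commutativity criterion. The difficulty throughout is of this nature: the half-isomorphism is genuinely two-valued at each step, so the proof is a case analysis over branches, and every step that converts an equation into commutativity must draw on condition~\eqref{co1} in $Q'$, since the source loop $Q$ is not assumed to satisfy \eqref{co1} and no symmetrisation is available there.
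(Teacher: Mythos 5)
Your strategy is essentially the paper's: square the element $w = x*y*x^{-1}$ via \eqref{eql31b}, compute $f(w)^2$ through an automorphism of $Q'$ fixing the identity, compare with the half-isomorphic image of $(x*y)*(y*x^{-1})$, and invoke Corollary \ref{lema32}. But the branch you yourself flag as ``the main obstacle'' is a genuine gap, not a routine loose end. Writing $a=f(x)$, $b=f(y)$: if $f(w)=(a^{-1}b)\cdot a = a^{-1}ba$, then $f(M)=(a^{-1}ba)^2=(a^{-1}b)(ba)$, and the residual equation $(a^{-1}b)(ba)=(ab)(a^{-1}b)$ matches neither hypothesis of Corollary \ref{lema32} nor any identity proved in Section 3; massaging it with \eqref{eql31d} or Proposition \ref{prop22} does not convert it into a commutativity criterion. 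As written, the proof is therefore incomplete in exactly that case.

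The missing idea --- and the way the paper's proof actually runs --- is to pin down $f(w)$ \emph{before} squaring, by using that $w$ has two factorizations, $(x*y)*x^{-1} = x*(y*x^{-1})$ (Proposition \ref{prop22}(c)), and that you have a hypothesis on the image of \emph{each} factor. The first factorization gives $f(w)\in\{(ab)a^{-1},\,a^{-1}(ab)\}$, the second gives your $f(w)\in\{a(a^{-1}b),\,(a^{-1}b)a\}$, and $a^{-1}(ab)=a(a^{-1}b)$ by Proposition \ref{prop22}(a). Intersecting, either $(ab)a^{-1}=a(a^{-1}b)$, which cancels to $ba^{-1}=a^{-1}b$ and yields $ab=ba$ by Corollary \ref{cor21}; or $aba^{-1}=a^{-1}ba$, which yields $ab=ba$ by Corollary \ref{cor31} (the one place where \eqref{co1} in $Q'$ is needed); or else $f(w)=a^{-1}(ab)$. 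Only in this last case does one square, and then $f(M)=a^{-1}(ab^2)$ lands precisely in the hypotheses of Corollary \ref{lema32}. In other words, your problematic branch $f(w)=(a^{-1}b)a$ can only occur when commutativity has already been established, so it never survives to the squaring step. Inserting this preliminary intersection argument turns your sketch into the paper's proof; without it, the argument does not close.
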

\begin{proof} By Proposition \ref{prop27}, $f(x^{-1}) = f(x)^{-1}$. Then using the definition of half-isomorphisms:

\begin{center}
$f((x*y)*x^{-1})\in U = \{f(x)\cdot f(y)\cdot f(x)^{-1}, f(x)^{-1}\cdot (f(x)\cdot f(y))\}$,

$f(x*(y*x^{-1}))\in V = \{f(x)\cdot (f(x)^{-1}\cdot f(y)),f(x)^{-1}\cdot f(y) \cdot f(x)\}$.
\end{center}

Since $(x*y)*x^{-1} = x*(y*x^{-1})$, we have that $U\cap V \not = \emptyset$. If $f(x)\cdot f(y)\cdot f(x)^{-1} = f(x)\cdot (f(x)^{-1}\cdot f(y))$, then $f(y)\cdot  f(x)^{-1} =f(x)^{-1}\cdot f(y) $, and so $f(x)\cdot f(y) = f(y)\cdot f(x)$ by Corollary \ref{cor21}. When $f(x)\cdot f(y)\cdot f(x)^{-1} = f(x)^{-1}\cdot f(y) \cdot f(x)$ we have that $f(x)\cdot f(y) = f(y)\cdot f(x)$ by Corollary \ref{cor31}. Now consider that $f(x*y*x^{-1}) = f(x)^{-1}\cdot (f(x)\cdot f(y))$.

By Lemma \ref{lema31} (b), we have that $(x*y*x^{-1})^2 = (x*y)*(y*x^{-1})$, and then

\begin{equation}
\label{eqlema41a}
f((x*y*x^{-1})^2)\in \{(f(x)\cdot f(y))\cdot(f(x)^{-1}\cdot f(y)),(f(x)^{-1}\cdot f(y))\cdot  (f(x)\cdot f(y))\}
\end{equation}

Note that $\varphi = L_{f(x)}L_{f(x)^{-1}}\in Aut(Q')$. Then

\begin{center}
$f((x*y*x^{-1})^2) = (f(x*y*x^{-1}))^2 = ((f(y))\varphi)^2 = (f(y)^2)\varphi = f(x)^{-1}\cdot (f(x)\cdot f(y)^2)$,
\end{center}

and putting this in \eqref{eqlema41a}, we get

\begin{center}
$f(x)^{-1}\cdot (f(x)\cdot f(y)^2) = (f(x)\cdot f(y))\cdot(f(x)^{-1}\cdot f(y))$ or \\$f(x)^{-1}\cdot (f(x)\cdot f(y)^2) = (f(x)^{-1}\cdot f(y))\cdot  (f(x)\cdot f(y))$.
\end{center}

By Corollary \ref{lema32}, we get that $f(x)\cdot f(y) = f(y)\cdot f(x)$ in both cases.
\end{proof}

\begin{prop}
\label{prop41}
$f$ is a special half-isomorphism.
\end{prop}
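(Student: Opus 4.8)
The plan is to verify criterion (c) of Proposition~\ref{prop26}: I will show that whenever $x * y = y * x$ in $Q$, the images satisfy $f(x)\cdot f(y) = f(y)\cdot f(x)$. So fix a commuting pair $x,y \in Q$, write $a = f(x)$ and $b = f(y)$ (so that $f(x^{-1}) = a^{-1}$ and $f(y^{-1}) = b^{-1}$ by Proposition~\ref{prop27}), and suppose towards a contradiction that $ab \neq ba$. Since $x$ and $y$ commute, all of $x^{\pm 1},y^{\pm 1}$ pairwise commute by Corollary~\ref{cor21}; hence for every ordered ``mixed'' pair $(u,v)$, with $u$ a power of $x$ and $v$ a power of $y$ (or vice versa), the value $f(u*v)$ lies in the two-element set $\{f(u)\cdot f(v),\, f(v)\cdot f(u)\}$ (these two elements are distinct, for otherwise Corollary~\ref{cor21} would force $ab=ba$). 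I will call such a pair \emph{preserving} or \emph{reversing} accordingly.

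The first step is to record two structural relations among these orientations. Since $u*v = v*u$, we have $f(v*u)=f(u*v)$, so $(u,v)$ is preserving if and only if $(v,u)$ is reversing. Next --- and this is where the antiautomorphic inverse property enters --- applying Proposition~\ref{prop27} with $n=-1$ together with \eqref{aaip} in both loops to $f(u*v)$ shows that $(u,v)$ is preserving if and only if $(v^{-1},u^{-1})$ is preserving. Combining the two relations gives that $(u,v)$ is preserving if and only if $(u^{-1},v^{-1})$ is reversing. Consequently the orientations of all four mixed pairs are completely determined by the orientations on $(x,y)$ and on $(x,y^{-1})$, leaving exactly four global configurations to examine.

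The heart of the argument is then to discharge each configuration by applying Lemma~\ref{lema41} to a single, carefully chosen ordered pair, treating its first slot as the lemma's ``$x$''. The catch is that the hypotheses of Lemma~\ref{lema41} are asymmetric: the function must preserve the order on the first pair but reverse it on the companion inverse pair. A naive substitution (or the squaring analysis around $x*y*x^{-1}$) therefore fails for the configurations in which $(x,y)$ and $(x,y^{-1})$ carry the same orientation; the orientation-flip on inverse pairs established above is exactly what repairs this. Concretely, I expect the assignment to be: apply Lemma~\ref{lema41} to $(x,y)$ when $(x,y)$ is preserving and $(x,y^{-1})$ is reversing; to $(x,y^{-1})$ when $(x,y)$ is reversing and $(x,y^{-1})$ is preserving; to $(y,x)$ when both are reversing; and to $(y,x^{-1})$ when both are preserving. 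In each case the two relations let me check the two (asymmetric) hypotheses of Lemma~\ref{lema41} exactly, and its conclusion forces some $f(x^{\pm 1})$ to commute with some $f(y^{\pm 1})$, whence $a$ and $b$ commute by Corollary~\ref{cor21} --- contradicting $ab\neq ba$. Ruling out all four configurations establishes condition (c) of Proposition~\ref{prop26}, and therefore $f$ is special.

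I expect the main obstacle to be precisely the bookkeeping of the previous paragraph: matching each of the four orientation patterns with a substitution for which both asymmetric hypotheses of Lemma~\ref{lema41} hold simultaneously. The decisive insight that makes this uniform is the \eqref{aaip}-induced relation ``$(u,v)$ preserving $\iff$ $(u^{-1},v^{-1})$ reversing,'' which supplies the required reversal on the companion pair even in the same-orientation configurations; without it one is tempted to search for an additional lemma, but the single Lemma~\ref{lema41}, applied four times with the right arguments, suffices.
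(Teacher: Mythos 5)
Your proposal is correct and follows essentially the same route as the paper: reduce to criterion (c) of Proposition~\ref{prop26}, and for a commuting pair use Lemma~\ref{lema41} together with the \eqref{aaip}-induced orientation flips to force $f(x)\cdot f(y)=f(y)\cdot f(x)$ in every case. The paper merely compresses your four configurations into two by a symmetry reduction (assuming $f(x*y)=f(x)\cdot f(y)$ without loss of generality) and, in the ``both preserving'' case, applies Lemma~\ref{lema41} to $(y^{-1},x^{-1})$ rather than your equally valid $(y,x^{-1})$.
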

\begin{proof} Let $x,y\in Q$ be such that $x*y = y*x$. It is suffices to prove that $f(x)\cdot f(y) = f(y)\cdot f(x)$ by Proposition \ref{prop26}. 

The case $f(x*y) = f(y*x) = f(x)\cdot f(y)$ is symmetric to the case $f(y*x) = f(y)\cdot f(x)$, so we only prove the first one. If $f(y*x^{-1}) = f(x^{-1})\cdot f(y)$, then $f(x)\cdot f(y) = f(y)\cdot f(x)$ by Lemma \ref{lema41}. Now consider that $f(y*x^{-1}) = f(y)\cdot f(x^{-1})$. By Corollary \ref{cor21}, we have that $\langle x,y\rangle$ is commutative, and then $y*x^{-1} = x^{-1}*y$ and $f(x^{-1}*y) = f(y)\cdot f(x^{-1})$. Since $f(x*y) = f(x)\cdot f(y)$, using (AAIP) and Proposition \ref{prop27} we get

\begin{center}
$f(y^{-1}*x^{-1}) = (f(x*y))^{-1} = f(y)^{-1}\cdot f(x)^{-1} = f(y^{-1})\cdot f(x^{-1})$.
\end{center}

By Lemma \ref{lema41}, we have that $f(y^{-1})\cdot f(x^{-1}) = f(x^{-1})\cdot f(y^{-1})$, and then using (AAIP) and Proposition \ref{prop27} we get that $f(x)\cdot f(y) = f(y)\cdot f(x)$.
\end{proof}

Since $f$ is special, by Proposition \ref{prop26} we have, for all $x,y\in Q$:

\begin{eqnarray}
\label{sp1}
\textrm{If } x*y = y*x, \textrm{ then } f(x)\cdot f(y) = f(y)\cdot f(x),\\
\label{sp2}
\textrm{If } f(x*y) = f(x)\cdot f(y),  \textrm{ then } f(y*x) = f(y)\cdot f(x),\\
\label{sp3}
\textrm{If } f(x*y) = f(y)\cdot f(x),  \textrm{ then } f(y*x) = f(x)\cdot f(y).
\end{eqnarray}

Let $L,L'$ be flexible loops. A mapping $f$ from $L$ into $L'$ is called a \emph{semi-homomorphism} if $f(xyx) = f(x)f(y)f(x)$, for all $x,y\in L$.

\begin{prop}
\label{prop42}
$f$ is a semi-homomorphism.
\end{prop}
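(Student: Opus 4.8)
The plan is to fix $x,y\in Q$, abbreviate $a = f(x)$ and $b = f(y)$, and compute $f(x*y*x)$ in two different ways. Since both $Q$ and $Q'$ are flexible (Remark \ref{obs31}), the products $x*y*x = (x*y)*x = x*(y*x)$ and $a\cdot b\cdot a = (a\cdot b)\cdot a = a\cdot(b\cdot a)$ are unambiguous, and the goal is exactly $f(x*y*x) = a\cdot b\cdot a$. Throughout I would use the half-isomorphism property together with the consequences of specialness \eqref{sp2}--\eqref{sp3} established after Proposition \ref{prop41}.

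First I would dispose of the case $a\cdot b = b\cdot a$. Applying the definition of half-isomorphism to $(x*y)*x$ gives $f(x*y*x)\in\{f(x*y)\cdot a,\; a\cdot f(x*y)\}$, and since $f(x*y)\in\{a\cdot b,\,b\cdot a\}=\{a\cdot b\}$, both candidates coincide: indeed $(a\cdot b)\cdot a = a\cdot b\cdot a$ and, using commutativity together with flexibility, $a\cdot(a\cdot b) = a\cdot(b\cdot a) = (a\cdot b)\cdot a = a\cdot b\cdot a$. Hence $f(x*y*x)=a\cdot b\cdot a$ here.

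Now suppose $a\cdot b\neq b\cdot a$. The half-isomorphism property gives $f(x*y)\in\{a\cdot b,\,b\cdot a\}$; by symmetry I may assume $f(x*y)=a\cdot b$, whence $f(y*x)=b\cdot a$ by \eqref{sp2} (the other choice, settled by \eqref{sp3}, is entirely analogous). Expanding $f(x*y*x)$ through the parenthesization $(x*y)*x$ yields
\[ f(x*y*x)\in\{(a\cdot b)\cdot a,\; a\cdot(a\cdot b)\}=\{a\cdot b\cdot a,\; a\cdot(a\cdot b)\}, \]
while expanding through $x*(y*x)$ yields
\[ f(x*y*x)\in\{a\cdot(b\cdot a),\; (b\cdot a)\cdot a\}=\{a\cdot b\cdot a,\; (b\cdot a)\cdot a\}. \]
Since both describe the same element, it lies in the intersection of these two sets, which contains $a\cdot b\cdot a$. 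If $f(x*y*x)\neq a\cdot b\cdot a$, then it must equal $a\cdot(a\cdot b)$ by the first description and $(b\cdot a)\cdot a$ by the second, forcing $a\cdot(a\cdot b)=(b\cdot a)\cdot a$.

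The crux is then to invoke the defining property \eqref{co1} of $Q'$: the equation $a\cdot(a\cdot b)=(b\cdot a)\cdot a$ is equivalent to $a\cdot b=b\cdot a$, contradicting the standing assumption of this case. Therefore $f(x*y*x)=a\cdot b\cdot a$ in all cases, so $f$ is a semi-homomorphism. I expect this last step to be the main obstacle: the two parenthesizations alone only pin $f(x*y*x)$ down to two candidates, and it is precisely the hypothesis \eqref{co1}---engineered to forbid $a\cdot(a\cdot b)=(b\cdot a)\cdot a$ in the absence of commutativity---that eliminates the spurious one. The only other point needing care is the bookkeeping of which product is $a\cdot b$ and which is $b\cdot a$, which is handled cleanly by specialness through \eqref{sp2}--\eqref{sp3}.
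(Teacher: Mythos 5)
Your proposal is correct and follows essentially the same route as the paper's proof: expand $f(x*y*x)$ through both parenthesizations $(x*y)*x$ and $x*(y*x)$ using specialness (\ref{sp2})--(\ref{sp3}), observe that the only alternative to $f(x)\cdot f(y)\cdot f(x)$ forces $f(x)\cdot(f(x)\cdot f(y)) = (f(y)\cdot f(x))\cdot f(x)$, and eliminate it via \eqref{co1}. The only cosmetic difference is that you case-split on commutativity of $f(x)\cdot f(y)$ up front, whereas the paper absorbs that case by noting that \eqref{co1} then still yields $f(x*y*x)=f(x)\cdot f(y)\cdot f(x)$.
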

\begin{proof} Let $x,y\in Q$. We have two cases:

(i) $f(x*y) = f(x)\cdot f(y)$. Since $f$ is special, we have $f(y*x) = f(y)\cdot f(x)$ by \eqref{sp2}. Then:

\begin{center}
$f((x*y)*x) \in \{f(x)\cdot f(y)\cdot f(x),f(x)\cdot (f(x)\cdot f(y))\}$,

$f(x*(y*x)) \in \{f(x)\cdot f(y)\cdot f(x),(f(y)\cdot f(x))\cdot f(x)\}$.
\end{center}

Since $Q$ is flexible, either $f(x*y*x)= f(x)\cdot f(y)\cdot f(x)$ or 

\begin{equation}
\label{eqprop42}
f(x*y*x) = f(x)\cdot (f(x)\cdot f(y)) = (f(y)\cdot f(x))\cdot f(x).
\end{equation}

In the second case, we get  $f(x)\cdot f(y) = f(y)\cdot f(x)$ since $Q'$ satisfies \eqref{co1}. Putting this in \eqref{eqprop42}, we get $f(x*y*x)= f(x)\cdot f(y)\cdot f(x)$.

(ii) $f(x*y) = f(y)\cdot f(x)$. This case is analogous to (i).
\end{proof}

\begin{prop}
\label{prop42a}
$Q$ satisfies \eqref{co1}.
\end{prop}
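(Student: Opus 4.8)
The plan is to verify the two directions of \eqref{co1} for $(Q,*)$ separately, transporting the condition to $Q'$ through the half-isomorphism $f$ and exploiting that $Q'$ already satisfies \eqref{co1}. The forward direction, namely that $x*y = y*x$ implies $x*(x*y) = (y*x)*x$, is free: since $Q$ is automorphic it is flexible, so this is exactly Remark \ref{obs31}. Hence the whole content is the converse: assuming $x*(x*y) = (y*x)*x$ in $Q$, I must deduce $x*y = y*x$.

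First I would apply Theorem \ref{teo31} inside $Q$ to rewrite the hypothesis $x*(x*y) = (y*x)*x$ as the equivalent relation $x^2 * y = y * x^2$, where $x^2 = x*x$. This converts an inner-mapping condition into a genuine commuting relation involving the square of $x$, which is what makes the transport through $f$ possible. Since $x^2$ and $y$ now commute in $Q$, property \eqref{sp1} (a consequence of $f$ being special, Proposition \ref{prop41}) gives $f(x^2)\cdot f(y) = f(y)\cdot f(x^2)$, and Proposition \ref{prop27} lets me replace $f(x^2)$ by $f(x)^2$. Thus $f(x)^2$ commutes with $f(y)$ in $Q'$.

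Next I would run Theorem \ref{teo31} in the opposite direction, this time inside $Q'$: the relation $f(x)^2\cdot f(y) = f(y)\cdot f(x)^2$ is equivalent to $f(x)\cdot(f(x)\cdot f(y)) = (f(y)\cdot f(x))\cdot f(x)$. Because $Q'$ is assumed to satisfy \eqref{co1}, this last equation forces $f(x)\cdot f(y) = f(y)\cdot f(x)$, so the images $f(x)$ and $f(y)$ commute in $Q'$.

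Finally I would descend back to $Q$ using that $f$ is special: by Proposition \ref{prop26}(b) one has $\{f(x*y), f(y*x)\} = \{f(x)\cdot f(y), f(y)\cdot f(x)\}$, and the right-hand set has just collapsed to the singleton $\{f(x)\cdot f(y)\}$. Hence $f(x*y) = f(y*x)$, and injectivity of $f$ yields $x*y = y*x$, completing the converse. I expect the only delicate point to be keeping straight which loop each invocation of Theorem \ref{teo31} lives in — once the hypothesis is converted to the commuting of $x^2$ with $y$, everything else is a clean relay through $f$ and back.
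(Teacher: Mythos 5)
Your argument is correct, but it follows a genuinely different route from the paper's. The paper proves this by splitting into the two cases $f(x*y)=f(x)\cdot f(y)$ and $f(x*y)=f(y)\cdot f(x)$ and, in each, using the semi-homomorphism property (Proposition \ref{prop42}) together with speciality (\eqref{sp2}) to compute $f(x*(x*y))$ and $f((y*x)*x)$ directly, arriving at $f(x)\cdot(f(x)\cdot f(y))=(f(y)\cdot f(x))\cdot f(x)$ and then invoking \eqref{co1} in $Q'$. You reach the same key equation in $Q'$, but by a cleaner relay: Theorem \ref{teo31} applied in $Q$ (valid there since it holds in every automorphic loop, with no \eqref{co1} hypothesis) converts the assumption to $x^2*y=y*x^2$; speciality (\eqref{sp1}) plus Proposition \ref{prop27} transports this to $f(x)^2\cdot f(y)=f(y)\cdot f(x)^2$; and Theorem \ref{teo31} applied in $Q'$ converts back. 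This avoids both the case analysis and any appeal to Proposition \ref{prop42}, and it makes transparent \emph{why} \eqref{co1} pulls back along $f$: by Theorem \ref{teo31} the condition is equivalent to the square map reflecting centrality, and special half-isomorphisms preserve both commuting pairs and squares. The trade-off is that your proof leans twice on the technically heavier Theorem \ref{teo31}, whereas the paper's stays within the elementary half-isomorphism calculus of Section 4; since Theorem \ref{teo31} is already established and needed for Corollary \ref{cor32}, this costs nothing here. Your concluding step is also fine: once $f(x)\cdot f(y)=f(y)\cdot f(x)$, the set $\{f(x)\cdot f(y),f(y)\cdot f(x)\}$ is a singleton, so $f(x*y)=f(y*x)$ and injectivity gives $x*y=y*x$ (indeed this last step needs only the definition of half-isomorphism, not even Proposition \ref{prop26}(b)).
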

\begin{proof} Let $x,y\in Q$ be such that $x*(x*y) = (y*x)*x$. We have two cases:

(i) $f(x*y) = f(x)\cdot f(y)$. By Proposition \ref{prop42}, we have that 

\begin{center}
$f((x*y)*x)= (f(x)\cdot f(y))\cdot f(x) =  f(x)\cdot (f(y)\cdot f(x))= f(x*(y*x))$. 
\end{center}

Since $f$ is special, we get that $f(x*(x*y)) = f(x) \cdot (f(x)\cdot f(y))$ and $f((y*x)*x) = (f(y)\cdot f(x))\cdot f(x)$ by \eqref{sp2}. Since $x*(x*y) = (y*x)*x$, we get $f(x) \cdot (f(x)\cdot f(y)) =  (f(y)\cdot f(x))\cdot f(x)$, and then $f(x)\cdot f(y) = f(y)\cdot f(x)$ since $Q'$ satisfies \eqref{co1}. Therefore $x*y = y*x$.

(ii) $f(x*y) = f(y)\cdot f(x)$. This case is analogous to (i).
\end{proof}

\begin{defi}(\cite{GGRS16}) Let $x,y,z\in Q$. The triple $(x,y,z)$ is called a \emph{GG-triple} of $f$ if 

(i) $f(x*y) = f(x)\cdot f(y) \not = f(y)\cdot f(x)$,

(ii) $f(x*z) = f(z)\cdot f(x) \not = f(x)\cdot f(z)$.

\end{defi}

\begin{obs}
In Example \ref{ex1}, $(3,2,6)$ is a GG-triple.
\end{obs}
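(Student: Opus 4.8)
The plan is to unwind the definition of a GG-triple directly for the triple $(x,y,z)=(3,2,6)$ and to verify conditions (i) and (ii) by reading off the relevant entries of the two Cayley tables in Example \ref{ex1}. Recall that the half-isomorphism there is the identity on the underlying set $\{1,\dots,7\}$, i.e.\ $f(x)=x$, so $f(a*b)$ is just the $*$-product computed in $C_7$, while $f(a)\cdot f(b)$ and $f(b)\cdot f(a)$ are the two possible $\cdot$-products in the non-associative loop $L'$. Thus the whole verification reduces to three table look-ups for each of the two conditions.

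First I would check condition (i), namely $f(x*y)=f(x)\cdot f(y)\neq f(y)\cdot f(x)$ with $x=3,\,y=2$. Reading row $3$, column $2$ of the $*$-table gives $3*2=4$, so $f(3*2)=4$. From the $\cdot$-table, $f(3)\cdot f(2)=3\cdot 2=4$ (row $3$, column $2$) and $f(2)\cdot f(3)=2\cdot 3=7$ (row $2$, column $3$). Hence $f(3*2)=f(3)\cdot f(2)=4\neq 7=f(2)\cdot f(3)$, which is exactly condition (i).

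Next I would check condition (ii), namely $f(x*z)=f(z)\cdot f(x)\neq f(x)\cdot f(z)$ with $x=3,\,z=6$. Row $3$, column $6$ of the $*$-table gives $3*6=1$, so $f(3*6)=1$. From the $\cdot$-table, $f(6)\cdot f(3)=6\cdot 3=1$ (row $6$, column $3$) and $f(3)\cdot f(6)=3\cdot 6=2$ (row $3$, column $6$). Hence $f(3*6)=f(6)\cdot f(3)=1\neq 2=f(3)\cdot f(6)$, which is exactly condition (ii). Since both conditions hold, $(3,2,6)$ is a GG-triple.

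There is no genuine obstacle here: the statement is an immediate consequence of the computations already displayed in Example \ref{ex1}, which are precisely the two inequalities above. The only point requiring care is bookkeeping in the non-commutative, non-associative table for $\cdot$: condition (i) pairs $f(x*y)$ with the \emph{left} product $f(x)\cdot f(y)$, whereas condition (ii) pairs $f(x*z)$ with the \emph{right} product $f(z)\cdot f(x)$, so the two conditions are not symmetric and one must read the correct row-and-column in each case.
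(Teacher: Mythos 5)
Your verification is correct and is exactly the paper's justification: the two displayed computations in Example \ref{ex1}, namely $f(3*2)=4=f(3)\cdot f(2)\neq f(2)\cdot f(3)$ and $f(3*6)=1=f(6)\cdot f(3)\neq f(3)\cdot f(6)$, are precisely conditions (i) and (ii) of the GG-triple definition for $(3,2,6)$. All six table look-ups check out, so nothing further is needed.
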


The next lemma is a well-known result about loops. A proof of it can be found in \cite[Lemma~1]{GA19}.

\begin{lema}
\label{lema42} Let $L$ be a loop and $H,K$ be subloops of $L$. If $L = H\cup K$, then either $H = L$ or $K = L$.
\end{lema}

Let $S$ be a non empty subset of a loop $L$. The \emph{commutant} of $S$ is the set 

\begin{center}
$C_L(S) = \{x\in L\,|\, xy=yx, \forall \, y\in S\}$.
\end{center}

If $L$ is an automorphic loop, then $C_L(S)$ is a subloop of $L$ \cite[Proposition $2.10$]{KKPV16}. 

\begin{prop} 
\label{prop43} Let $Q_1$ and $Q_2$ be automorphic loops and $\varphi: Q_1\to Q_2$ be a special half-isomorphism. If $\varphi$ has no GG-triple, then $\varphi$ is trivial.
\end{prop}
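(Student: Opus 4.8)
The statement is that a special half-isomorphism $\varphi: Q_1 \to Q_2$ between automorphic loops with no GG-triple must be trivial. Let me think about what "no GG-triple" buys us and how to convert it into global structure.

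The statement asserts that a special half-isomorphism with no GG-triple is forced to be trivial. My plan is to split $Q_1$ into the elements on which $\varphi$ behaves homomorphically and those on which it behaves anti-homomorphically, show that these two sets are subloops covering $Q_1$, and then apply Lemma~\ref{lema42}. Throughout write $*$ and $\cdot$ for the operations of $Q_1$ and $Q_2$. From Proposition~\ref{prop26} (special) I will repeatedly use two facts: commuting preimages have commuting images, and the swap property $\varphi(x*y)=\varphi(x)\cdot\varphi(y)$ if and only if $\varphi(y*x)=\varphi(y)\cdot\varphi(x)$. Set
\[
H=\{x\in Q_1:\varphi(x*y)=\varphi(x)\cdot\varphi(y)\ \text{for all}\ y\in Q_1\},\qquad K=\{x\in Q_1:\varphi(x*y)=\varphi(y)\cdot\varphi(x)\ \text{for all}\ y\in Q_1\}.
\]

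The first step is the covering $Q_1=H\cup K$, and this is exactly where the hypothesis ``no GG-triple'' enters. Fix $x\notin H$. Then there is $y_1$ with $\varphi(x*y_1)\neq\varphi(x)\cdot\varphi(y_1)$, and since $\varphi$ is a half-isomorphism this forces $\varphi(x*y_1)=\varphi(y_1)\cdot\varphi(x)\neq\varphi(x)\cdot\varphi(y_1)$. If moreover $x\notin K$, there is $y_2$ with $\varphi(x*y_2)=\varphi(x)\cdot\varphi(y_2)\neq\varphi(y_2)\cdot\varphi(x)$. But then $(x,y_2,y_1)$ is a GG-triple of $\varphi$, contradicting the hypothesis; hence every $x$ lies in $H$ or in $K$, so $Q_1=H\cup K$.

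The technical heart, and the step I expect to be the main obstacle, is showing that $H$ and $K$ are subloops; by the anti-symmetry of the two definitions it is enough to treat $H$. Clearly $1\in H\cap K$, and one checks that $H\cap K=\varphi^{-1}(C_{Q_2}(Q_2))$: an element $x$ lies in both sets precisely when $\varphi(x)$ commutes with every $\varphi(y)$, i.e.\ with all of $Q_2$. Since the commutant is a subloop and the images of elements of $H\cap K$ pairwise commute, $\varphi$ restricts to a genuine isomorphism there, so $H\cap K$ is already a subloop. The real difficulty is the ``genuinely homomorphic'' part: to prove $a*b\in H$ for $a,b\in H$ one gets $\varphi(a*b)=\varphi(a)\cdot\varphi(b)$ for free, but because $Q_1$ is \emph{not} associative the factor $a*b$ cannot simply be split inside a third product. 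The plan is to argue by contradiction using the covering: $a*b$ already lies in $H$ or $K$, so it suffices to exclude $a*b$ lying genuinely in $K$. To do this I would, for each $y$, solve $(a*b)*y=a*(b*y')$ by loop division so that the two $H$-elements sit on the left, evaluate $\varphi$ using $a,b\in H$ to obtain $\varphi(a)\cdot(\varphi(b)\cdot\varphi(y'))$, and compare this with the value $\varphi(y)\cdot(\varphi(a)\cdot\varphi(b))$ dictated by $a*b\in K$; reconciling these two expressions, using the swap property, the (AAIP), and Corollaries~\ref{cor21} and~\ref{cor31} together with the commutant computation above, should force $\varphi(a*b)$ into the commutant and hence place $a*b$ in $H$. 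The same scheme applied to left and right divisions gives closure under division, and the anti-homomorphic version gives that $K$ is a subloop; this bookkeeping, substituting the automorphic identities of Lemma~\ref{lema31} and Proposition~\ref{prop22} for associativity, is where essentially all the work lies.

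Once $H$ and $K$ are known to be subloops with $Q_1=H\cup K$, Lemma~\ref{lema42} yields $Q_1=H$ or $Q_1=K$. In the first case $\varphi(x*y)=\varphi(x)\cdot\varphi(y)$ for all $x,y$, so $\varphi$ is an isomorphism; in the second $\varphi$ is an anti-isomorphism. Either way $\varphi$ is trivial. I expect the covering and the concluding union argument to be short, with the delicate point concentrated entirely in the closure of $H$ under multiplication and division, where the lack of associativity in $Q_1$ must be absorbed by the special/swap properties and the automorphic-loop identities.
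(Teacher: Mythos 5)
Your first step---the covering $Q_1=H\cup K$ via the absence of GG-triples---is exactly the paper's opening move (with $H,K$ playing the roles of its $A,B$). But from there your proof has a genuine gap: everything hinges on $H$ and $K$ being subloops of $Q_1$, and you do not prove this; you only outline a plan (``reconciling these two expressions \dots should force $\varphi(a*b)$ into the commutant''). The plan itself is shaky: after solving $(a*b)*y=a*(b*y')$ by division, the element $y'$ depends on $y$ in an uncontrolled, non-associative way, and comparing $\varphi(a)\cdot(\varphi(b)\cdot\varphi(y'))$ with $\varphi(y)\cdot(\varphi(a)\cdot\varphi(b))$ gives no visible route to a contradiction. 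So the ``technical heart'' you flag is not carried out, and it is not clear it can be carried out along these lines.

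The paper avoids this entirely with an observation you are missing. For $x\in H$ and $y\in K$ one gets $\varphi(x*y)=\varphi(x)\cdot\varphi(y)=\varphi(y*x)$, and since $\varphi$ is a bijection this forces $x*y=y*x$. Hence $H\subset C_{Q_1}(y)$ for every $y\in K$ and $K\subset C_{Q_1}(x)$ for every $x\in H$, so $Q_1=H\cup K\subset C_{Q_1}(x)\cup C_{Q_1}(y)$. Commutants \emph{are} subloops in automorphic loops (the fact quoted just before the proposition), so Lemma~\ref{lema42} applies to the two commutants rather than to $H$ and $K$. If some $x\in H$ has $C_{Q_1}(x)\neq Q_1$, then every $y\in K$ has $C_{Q_1}(y)=Q_1$, and specialness (commuting arguments have commuting images) turns the anti-homomorphic condition on such $y$ into the homomorphic one, giving $K\subset H$ and $Q_1=H$; symmetrically in the other case $Q_1=K$. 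You should replace your subloop-closure step with this commutant argument; the rest of your write-up then goes through.
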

\begin{proof} Let $A = \{x\in Q_1\,|\, \varphi(x*u) = \varphi(x)\cdot \varphi(u), \forall\, u\in Q_1\}$ and $B = \{y\in Q_1\,|\, \varphi(y*u) = \varphi(u)\cdot \varphi(y), \forall\, u\in Q_1\}$. Since $\varphi$ has no GG-triple, then $Q_1 = A\cup B$.

For $x\in A$ and $y\in B$, we have by definition that $\varphi(x*y) = \varphi(x)\cdot \varphi(y) = \varphi(y*x)$, and then $x*y=y*x$. Thus $A\subset C_{Q_1}(y)$ and $B\subset C_{Q_1}(x)$, for all $x\in A$ and $y\in B$. Since $Q_1 = A\cup B$, it follows by Lemma \ref{lema42} that, for every $(x,y)\in (A,B)$, either $Q_1 = C_{Q_1}(x)$ or $Q_1 = C_{Q_1}(y)$.

If there exists $x\in A$ such that $C_{Q_1}(x)\not = Q_1$, then $C_{Q_1}(y) = Q_1$, for all $y\in B$, and so $ \varphi(y*u) = \varphi(u)\cdot \varphi(y) = \varphi(y)\cdot \varphi(u)$, for all $u\in Q_1$ and $y\in B$, since $\varphi$ is  special. Thus $B\subset A$, and we get that $Q_1 = A$. Hence $\varphi$ is an isomorphism. Similarly, when $C_{Q_1}(x) = Q_1$, for all $x\in A$, we get that $A\subset B$, and then  $Q_1 = B$ and $\varphi$ is an anti-isomorphism.
\end{proof}

For $x\in Q$, define the mapping $\phi_x = R_xL_{x^{-1}}$. Then $(u)\phi_x = x^{-1}*u*x = u^x$, for all $u\in Q$. Since $(1)\phi_x = 1$, we have that $\phi_x\in Aut(Q)$, and then, for all $u,v\in Q$:

\begin{equation}
\label{phi1}
(u*v)^x = u^x*v^x
\end{equation}

\begin{lema}
\label{lema43} Let $x,y\in Q$.

(a) If $x*y = y*x$, then $f(y^x) = f(y)^{f(x)} = f(y)^{f(x)^{-1}}$.

(b) If $x*y \not = y*x$ and $f(x*y) = f(x)\cdot f(y)$, then $f(y^x) = f(y)^{f(x)}$ and $f(y^{x^{-1}}) = f(y)^{f(x)^{-1}}$.

(c) If $x*y \not = y*x$ and $f(x*y) = f(y)\cdot f(x)$, then $f(y^x) = f(y)^{f(x)^{-1}}$ and $f(y^{x^{-1}}) = f(y)^{f(x)}$.
\end{lema}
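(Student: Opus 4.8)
The plan is to dispose of part (a) directly (it is the commuting case) and to reduce parts (b) and (c) to a single core statement, namely that in case (b) one has $f(y^x)=f(y)^{f(x)}$. Throughout write $a=f(x)$, $b=f(y)$, and recall that $y^x=x^{-1}*(y*x)=(x^{-1}*y)*x$ in $Q$.

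For part (a), since $x*y=y*x$, specialness \eqref{sp1} gives $ab=ba$, so $f(y*x)\in\{ba,ab\}$ is the single value $ab$. The definition of half-isomorphism then leaves only the two candidates $a^{-1}(ba)$ and $(ba)a^{-1}$ for $f(y^x)$, and by \eqref{eql31d} these two coincide; using $ab=ba$ together with \eqref{eql31d} a short computation shows the common value equals both $a^{-1}ba=f(y)^{f(x)}$ and $aba^{-1}=f(y)^{f(x)^{-1}}$, which is exactly (a).

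Next come the reductions. Part (c) reduces to part (b) by passing to the opposite loop $Q'^{\mathrm{op}}$ with product $u\diamond v=v\cdot u$: this is again an automorphic loop satisfying \eqref{co1}, the map $f:Q\to Q'^{\mathrm{op}}$ is still a special semi-homomorphism, and conjugation by $f(x)$ in $Q'^{\mathrm{op}}$ is conjugation by $f(x)^{-1}$ in $Q'$; hence the hypothesis of (c) becomes that of (b) and its two conclusions become those of (b). Within (b), the second identity reduces to the first: Lemma \ref{lema41} forbids $f(y*x^{-1})=f(x^{-1})\cdot f(y)$, since that would force $f(x)\cdot f(y)=f(y)\cdot f(x)$, impossible here; so $f(y*x^{-1})=f(y)\cdot f(x)^{-1}$ and, by \eqref{sp2}, $f(x^{-1}*y)=f(x^{-1})\cdot f(y)$. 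Thus $(x^{-1},y)$ is itself a case-(b) pair, and applying the first identity to it gives $f(y^{x^{-1}})=f(y)^{f(x^{-1})}=f(y)^{f(x)^{-1}}$.

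It remains to establish the first identity of (b), $f(y^x)=f(y)^{f(x)}$, and this is where I expect the real difficulty. Exactly as in (a), the half-isomorphism definition pins $f(y^x)$ to one of two values, the desired $A=a^{-1}ba=f(y)^{f(x)}$ and the spurious $B=a^{-1}(ab)=(ba)a^{-1}$; since $ab\neq ba$ in case (b) these are genuinely distinct, and the whole problem is to exclude $B$. The obstacle is that automorphic loops are only flexible and power-associative, with no left/right inverse property, so the expressions for conjugates do not collapse and a direct computation of $f(y^x)$ is unavailable. My approach is the squaring argument already used for Lemma \ref{lema41}: assuming $f(y^x)=B$, Proposition \ref{prop27} gives $f((y^x)^2)=B^2=a^{-1}(ab^2)$, while evaluating the same element along a second route — built from the now-determined branch values $f(x*y)=ab$, $f(x^{-1}*y)=a^{-1}b$, $f(y*x)=ba$ and the squaring identities \eqref{eql31b}, \eqref{eql31f}, \eqref{eql31g} — yields an independent value of one of the forms $(ab)(a^{-1}b)$ or $(a^{-1}b)(ab)$. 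Equating the two evaluations is precisely the hypothesis of Corollary \ref{lema32}, which forces $f(x)\cdot f(y)=f(y)\cdot f(x)$ and contradicts case (b); hence $f(y^x)=A$. The delicate points, and the parts I expect to consume the most effort, are the bookkeeping of which branch (direct or reversed) each intermediate product takes — controlled throughout by specialness \eqref{sp1}--\eqref{sp3} and Lemma \ref{lema41} — and the selection of the auxiliary element and the identity among \eqref{eql31a}--\eqref{eql31g} that makes the second evaluation land exactly on one of the two forms appearing in Corollary \ref{lema32}.
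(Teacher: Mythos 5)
Your part (a), your reduction of (c) to (b) via the opposite loop, and your reduction of the second identity of (b) to the first (via Lemma \ref{lema41} and \eqref{sp2} applied to the pair $(x^{-1},y)$) are all sound, and the opposite-loop device is a clean replacement for the paper's ``analogous'' case. The problem is the core step, $f(y^x)=f(y)^{f(x)}$ in case (b), which you only sketch — and the sketch, as described, cannot work. Write $a=f(x)$, $b=f(y)$, $A=a^{-1}(ba)=f(y)^{f(x)}$, $B=a^{-1}(ab)=(ba)a^{-1}$. Your plan is to assume $f(y^x)=B$, compute $f((y^x)^2)=B^2=a^{-1}(ab^2)$, evaluate $(y^x)^2$ by a second route, and feed the resulting equation into Corollary \ref{lema32}. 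But the natural second route is \eqref{eql31b} (with $x\mapsto x^{-1}$), giving $(y^x)^2=(x^{-1}*y)*(y*x)$ and hence $f((y^x)^2)\in\{(a^{-1}b)(ba),\,(ba)(a^{-1}b)\}$; and the identities \eqref{eql31d} and \eqref{eql31f} force $a^{-1}(ab^2)=(b^2a)a^{-1}=(ba)(a^{-1}b)$ \emph{identically} in any automorphic loop. So $B^2$ is always one of the two legitimate candidates (just as $A^2=(a^{-1}b)(ba)$ is the other, by \eqref{eql31b}), the hypothesis of Corollary \ref{lema32} is never produced, and no contradiction appears. The same collapse happens for $y^{x^{-1}}$ using \eqref{eql31g}. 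The squaring trick works in Lemma \ref{lema41} only because there the two branch values $ab$ and $a^{-1}b$ both carry $b$ on the \emph{same} side, which is exactly the configuration Corollary \ref{lema32} addresses; for the conjugate the branches are $ab$ and $ba$, and squaring cannot separate $A$ from $B$. A telling symptom is that your argument for (b) never invokes condition \eqref{co1}, whereas the exclusion of the spurious branch genuinely requires it.

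The paper's proof takes a different element: not $y^x$ but $w=x^{-1}*(x*y)=x*(x^{-1}*y)$ (equal by Proposition \ref{prop22}(a)). The two factorizations of $w$ give $f(w)\in U\cap V$ with $U=\{a^{-1}(ab),\,(ab)a^{-1}\}$ and $V=\{a(a^{-1}b),\,(a^{-1}b)a\}$, i.e.\ four possible matchings. Three are excluded: two by cancellation plus Corollary \ref{cor21} (each would give $ab=ba$), and the matching $(ab)a^{-1}=(a^{-1}b)a$, i.e.\ $aba^{-1}=a^{-1}ba$, by Corollary \ref{cor31} — this is precisely where \eqref{co1} enters. The surviving matching says $f$ takes the direct branch on the pairs $(x,x^{-1}*y)$ and $(x^{-1},x*y)$, and \eqref{sp2} then converts this into $f((x^{-1}*y)*x)=(a^{-1}b)a=f(y)^{f(x)}$ and $f((x*y)*x^{-1})=(ab)a^{-1}=f(y)^{f(x)^{-1}}$. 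You would need to replace your squaring step by an argument of this kind (or find another way to import \eqref{co1}); as it stands the central claim of (b) is unproved.
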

\begin{proof} Using the definition of half-isomorphisms, we get

\begin{eqnarray}
\label{eqlema43a}
f(x^{-1}*(x*y)) \in U = \{f(x)^{-1}\cdot f(x*y), f(x*y)\cdot f(x)^{-1}\},\\
\label{eqlema43b}
f(x*(x^{-1}*y)) \in V = \{f(x)\cdot f(x^{-1}*y), f(x^{-1}*y)\cdot f(x)\}.
\end{eqnarray}

Note that $x^{-1}*(x*y) = x*(x^{-1}*y)$, and then $U\cap V\not = \emptyset$.

(a) Since $f$ is special, we have that $f(x*y) = f(x)\cdot f(y) = f(y)\cdot f(x)$. Furthermore, $\langle x,y \rangle$ and $\langle f(x),f(y) \rangle$ are commutative by Corollary \ref{cor21}. Hence $f(x)^{-1}\cdot f(x*y) = f(x*y)\cdot f(x)^{-1}$ and by \eqref{eqlema43a}:

\begin{center}
$f(x^{-1}*y*x) = f(x^{-1}*(x*y)) = f(x)^{-1}\cdot f(y) \cdot f(x) = f(x)\cdot (f(x)^{-1}\cdot f(y)) = f(x)\cdot f(y) \cdot f(x)^{-1}$.
\end{center}

(b) Since $f$ is special, we have that $f(y*x^{-1}) = f(y)\cdot f(x)^{-1}$ by Lemma \ref{lema41} and \eqref{sp1}, and then $f(x^{-1}*y) = f(x)^{-1}\cdot f(y)$ by \eqref{sp2}. Putting this in \eqref{eqlema43a} and \eqref{eqlema43b}, we get

\begin{center}
$f(x^{-1}*(x*y)) \in U = \{f(x)^{-1}\cdot (f(x)\cdot f(y)), f(x)\cdot f(y)\cdot f(x)^{-1}\}$,

$f(x*(x^{-1}*y)) \in V = \{f(x)\cdot (f(x)^{-1}\cdot f(y)), f(x)^{-1}\cdot f(y)\cdot f(x)\}$.
\end{center}

Since $f(x)\cdot f(y)\not = f(y)\cdot f(x)$, we have that $f(x)^{-1}\cdot (f(x)\cdot f(y)) \not = f(x)^{-1}\cdot f(y)\cdot f(x)$ and $f(x)\cdot f(y)\cdot f(x)^{-1}\not =  f(x)\cdot (f(x)^{-1}\cdot f(y))$. Furthermore, $f(x)\cdot f(y)\cdot f(x)^{-1} \not = f(x)^{-1}\cdot f(y)\cdot f(x)$ by Corollary \ref{cor31}. Thus 

\begin{center}
 $f(x*(x^{-1}*y)) = f(x)\cdot (f(x)^{-1}\cdot f(y)) = f(x)^{-1}\cdot (f(x)\cdot f(y)) = f(x^{-1}*(x*y))$,
 \end{center} 

and hence $f(y^x) = f(y)^{f(x)}$ and $f(y^{x^{-1}}) = f(y)^{f(x)^{-1}}$ by \eqref{sp2}.

(c) Using (AAIP), we get $f(y^{-1}x^{-1}) = f(x)^{-1} \cdot f(y)^{-1}$. Since $f$ is special, we have $f(y)\cdot f(x)^{-1} \not = f(x)^{-1} \cdot f(y)$ by \eqref{sp1} and Corollary \ref{cor21}, and then $f(x^{-1}*y) =  f(y)\cdot f(x)^{-1}$ by Lemma \ref{lema41}. Putting this in \eqref{eqlema43a} and \eqref{eqlema43b}, we get

\begin{center}
$f(x^{-1}*(x*y)) \in U = \{f(x)^{-1}\cdot f(y)\cdot f(x), (f(y)\cdot f(x))\cdot f(x)^{-1}\},$

$f(x*(x^{-1}*y)) \in V = \{f(x)\cdot f(y)\cdot f(x)^{-1}, (f(y)\cdot f(x)^{-1})\cdot f(x)\}.$
\end{center}

Since $f(x)\cdot f(y)\not = f(y)\cdot f(x)$, we have that $f(x)^{-1}\cdot f(y)\cdot f(x)\not = (f(y)\cdot f(x)^{-1})\cdot f(x)$ and $(f(y)\cdot f(x))\cdot f(x)^{-1} \not = f(x)\cdot f(y)\cdot f(x)^{-1}$. Furthermore, $f(x)^{-1}\cdot f(y)\cdot f(x)\not =  f(x)\cdot f(y)\cdot f(x)^{-1}$ by Corollary \ref{cor31}. Thus 

\begin{center}
$f(x*(x^{-1}*y)) = (f(y)\cdot f(x)^{-1})\cdot f(x) = (f(y)\cdot f(x))\cdot f(x)^{-1} = f(x^{-1}*(x*y))$, 
\end{center}

and hence $f(y^x) = f(y)^{f(x)^{-1}}$ and $f(y^{x^{-1}}) = f(y)^{f(x)}$ by \eqref{sp3}.
\end{proof}

\begin{teo}
\label{teo41} Let $(Q,*)$ and $(Q',\cdot)$ be automorphic loops and $f:(Q,*)\to (Q',\cdot)$ be a half-isomorphism. If $(Q',\cdot)$ satisfies \eqref{co1}, then $f$ is trivial.
\end{teo}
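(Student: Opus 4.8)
The plan is to reduce the whole statement to a single combinatorial fact about $f$ and then attack that fact by contradiction. By Proposition \ref{prop41}, $f$ is a special half-isomorphism, so that \eqref{sp1}--\eqref{sp3} are available; and by Proposition \ref{prop43} a special half-isomorphism between automorphic loops that admits no GG-triple is automatically trivial. Hence it suffices to prove that $f$ has no GG-triple, and this is the heart of the argument. Everything else is bookkeeping.

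Assume for contradiction that $(x,y,z)$ is a GG-triple and write $a=f(x)$, $b=f(y)$, $c=f(z)$. By definition $f(x*y)=a\cdot b\neq b\cdot a$ and $f(x*z)=c\cdot a\neq a\cdot c$; by specialness \eqref{sp1} these inequalities force $x*y\neq y*x$ and $x*z\neq z*x$ in $Q$, while $a,b$ and $a,c$ fail to commute in $Q'$. The first step is to record how $f$ transports conjugation. Lemma \ref{lema43}(b) applies to the pair $(x,y)$ and Lemma \ref{lema43}(c) to the pair $(x,z)$, giving $f(y^{x})=f(y)^{f(x)}$ and $f(z^{x})=f(z)^{f(x)^{-1}}$; that is, the two conjugates land on opposite sides, $a^{-1}ba$ against $aca^{-1}$. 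This sign mismatch is exactly the obstruction that a GG-triple encodes, and the goal is to show it is incompatible with $a,b,c$ being forced into a commuting relation.

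The core is a case analysis on the relationship between $y$ and $z$, governed by the three alternatives of Lemma \ref{lema43}: either $y*z=z*y$, or $y*z\neq z*y$ with $f(y*z)=b\cdot c$, or $y*z\neq z*y$ with $f(y*z)=c\cdot b$. In each case I would use that $\phi_x=R_xL_{x^{-1}}$ is an automorphism of $Q$ (by \eqref{phi1}), so it preserves commuting and carries $\langle y,z\rangle$ onto $\langle y^{x},z^{x}\rangle$; applying $f$ and invoking specialness transports the $(y,z)$-relation to a relation between $f(y^{x})=a^{-1}ba$ and $f(z^{x})=aca^{-1}$. Comparing that transported relation with the evaluation of $f$ on $y^{x}*z^{x}=(y*z)^{x}$, and collapsing the two-valued half-isomorphism branches by means of the equivalence \eqref{co1} in $Q'$ (together with the semi-homomorphism property of Proposition \ref{prop42} and the inverse bookkeeping from Proposition \ref{prop23} and Proposition \ref{prop27}), the expectation is to be driven into $a\cdot b=b\cdot a$ or $a\cdot c=c\cdot a$, contradicting the GG-triple. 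With no GG-triple available, Proposition \ref{prop43} then finishes the proof.

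The main obstacle will be the two non-commuting sub-cases for $(y,z)$. There the half-isomorphism genuinely offers two possibilities for each product formed from $x,y,z$, so the argument must eliminate the ``mixed'' branches; this is precisely where the equivalence \eqref{co1} — which is what the hypothesis on $Q'$ buys us, and which Proposition \ref{prop42a} also grants in $Q$ — has to be combined with the conjugation facts of Corollary \ref{cor31} and Corollary \ref{cor21} to prevent the asymmetry recorded in the second step from being silently ``repaired''. Keeping the inverses consistent throughout via the AAIP is the routine but error-prone part of the computation.
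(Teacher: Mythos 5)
Your reduction is exactly the paper's: Proposition \ref{prop41} makes $f$ special, Proposition \ref{prop43} reduces everything to the nonexistence of a GG-triple, and Lemma \ref{lema43} applied to the pairs $(x,y)$ and $(x,z)$ produces the opposite-sided conjugation formulas $f(y^x)=f(y)^{f(x)}$, $f(z^x)=f(z)^{f(x)^{-1}}$ (together with $f(y^{x^{-1}})=f(y)^{f(x)^{-1}}$ and $f(z^{x^{-1}})=f(z)^{f(x)}$). Up to that point you are on the paper's track. The gap is the step that turns this ``sign mismatch'' into a contradiction: you describe it only as an expectation, and the vehicle you propose --- a case analysis on whether $y*z=z*y$, comparing the transported $(y,z)$-relation with $f\bigl((y*z)^x\bigr)$ --- does not obviously close. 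The trouble is that $f(y^x*z^x)$ and $f\bigl((y*z)^x\bigr)$ each still carry a two-valued order ambiguity, and in the branch where the half-isomorphism reverses the order you are left with a mixed identity such as $f(y)^{f(x)}\cdot f(z)^{f(x)} = f(z)^{f(x)^{-1}}\cdot f(y)^{f(x)}$, from which neither $f(x)\cdot f(y)=f(y)\cdot f(x)$ nor $f(x)\cdot f(z)=f(z)\cdot f(x)$ follows by any tool available in the paper.

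The paper's device, for which you have all the ingredients but do not assemble, is to work with the palindrome $y*z*y$ rather than the product $y*z$. The semi-homomorphism property (Proposition \ref{prop42}) makes $f$ single-valued on such words: $f(y^x*z^{x^{-1}}*y^x)=f(y^x)\cdot f(z^{x^{-1}})\cdot f(y^x)=(f(y)\cdot f(z)\cdot f(y))^{f(x)}$ by the conjugation formulas and \eqref{phi1}, and symmetrically with $x^{-1}$ in place of $x$. On the other hand, Lemma \ref{lema43} applied to the pair $(x,\,y*z*y)$ gives $f\bigl((y*z*y)^{x}\bigr)=(f(y*z*y))^{f(x)}$ or $f\bigl((y*z*y)^{x^{-1}}\bigr)=(f(y*z*y))^{f(x)^{-1}}$ according to the direction of $f(x*(y*z*y))$; in either case injectivity of $f$ forces $y^{x^{\varepsilon}}*z^{x^{\varepsilon}}*y^{x^{\varepsilon}}=y^{x^{\varepsilon}}*z^{x^{-\varepsilon}}*y^{x^{\varepsilon}}$ for some sign $\varepsilon$, hence $z^x=z^{x^{-1}}$, hence $x*z=z*x$ by Proposition \ref{prop42a} and Corollary \ref{cor31}, contradicting the GG-triple. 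Without this comparison on a word whose image is unambiguous (or an equivalent substitute), your outline does not yet constitute a proof.
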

\begin{proof} Suppose by contradiction that $f$ has a GG-triple $(x,y,z)$. Then $x*y\not = y*x$, $x*z\not = z*x$, $f(x*y) = f(x)\cdot f(y)$ and $f(x*z) = f(z)\cdot f(x)$. By Lemma \ref{lema43}, we have that $f(y^x) = f(y)^{f(x)}$, $f(y^{x^{-1}}) = f(y)^{f(x)^{-1}}$, $f(z^x) = f(z)^{f(x)^{-1}}$ and $f(z^{x^{-1}}) = f(z)^{f(x)}$. Using \eqref{phi1}, we get

\begin{center}
$f(y^x)\cdot f(z^{x^{-1}})\cdot f(y^x) = (f(y)\cdot f(z)\cdot f(y))^{f(x)}$ and  $f(y^{x^{-1}})\cdot f(z^{x})\cdot f(y^{x^{-1}}) = (f(y)\cdot f(z)\cdot f(y))^{f(x)^{-1}}$.
\end{center}

Since $f$ is a semi-homomophism, it follows that

\begin{eqnarray}
\label{eqteo41a}
f(y^x*z^{x^{-1}}*y^x) = (f(y*z*y))^{f(x)},\\
\label{eqteo41b}
f(y^{x^{-1}}*z^{x}*y^{x^{-1}}) = (f(y*z*y))^{f(x)^{-1}}.
\end{eqnarray}

If $f(x*(y*z*y)) = f(x)\cdot f(y*z*y)$, then $f(y^x*z^x*y^x) = f((y*z*y)^x) = (f(y*z*y))^{f(x)}$ by \eqref{phi1} and Lemma \ref{lema43}, and so $y^x*z^x*y^x = y^x*z^{x^{-1}}*y^x$ by \eqref{eqteo41a}. Thus $z^x = z^{x^{-1}}$, and hence $x*z=z*x$ by Proposition \ref{prop42a} and Corollary \ref{cor31}, which is a contradiction. When $f(x*(y*z*y)) = f(y*z*y) \cdot f(x)$ we have that $f(y^{x^{-1}}*z^{x^{-1}}*y^{x^{-1}}) = f((y*z*y)^{x^{-1}}) = (f(y*z*y))^{f(x)^{-1}}$ by \eqref{phi1} and Lemma \ref{lema43}, and so $y^{x^{-1}}*z^{x^{-1}}*y^{x^{-1}} = y^{x^{-1}}*z^{x}*y^{x^{-1}}$ by \eqref{eqteo41b}. Thus $z^x = z^{x^{-1}}$, and hence $x*z=z*x$ by Proposition \ref{prop42a} and Corollary \ref{cor31}, which is a contradiction.

Therefore $f$ has no GG-triple and the claim follows by Proposition \ref{prop43}.
\end{proof}

As direct consequences of Theorem \ref{teo41} and Corollaries \ref{cor32} and \ref{cor33}, we have the following results.

\begin{cor}
\label{cor41} Every half-isomorphism between uniquely $2$-divisible automorphic loops is trivial.
\end{cor}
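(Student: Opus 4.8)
The plan is to reduce this statement directly to Theorem \ref{teo41}, whose conclusion (triviality of the half-isomorphism) is exactly what we want, by verifying that its single nontrivial hypothesis holds for the codomain. First I would observe that, by assumption, the half-isomorphism $f$ maps between two uniquely $2$-divisible automorphic loops; in particular the target loop $(Q',\cdot)$ is a uniquely $2$-divisible automorphic loop. This is the only feature of the data I actually need to extract.

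Next I would invoke Corollary \ref{cor32}, which asserts that every uniquely $2$-divisible automorphic loop satisfies condition \eqref{co1}, namely that $x\cdot(x\cdot y) = (y\cdot x)\cdot x$ is equivalent to $x\cdot y = y\cdot x$. Applying this to $Q'$ shows that $(Q',\cdot)$ satisfies \eqref{co1}. With this verified, the three hypotheses of Theorem \ref{teo41} are in place: both $(Q,*)$ and $(Q',\cdot)$ are automorphic loops, $f$ is a half-isomorphism, and $Q'$ satisfies \eqref{co1}. Theorem \ref{teo41} then immediately yields that $f$ is trivial, completing the argument.

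I do not expect any genuine obstacle here, since this is a formal corollary: all the substantive work, including the special and semi-homomorphism properties of $f$ (Propositions \ref{prop41} and \ref{prop42}), the transfer of condition \eqref{co1} to the domain (Proposition \ref{prop42a}), and the GG-triple contradiction that drives the proof of Theorem \ref{teo41}, is already established upstream. The only point worth flagging is that the $2$-divisibility of the domain $Q$ is never used; the entire reduction rests on the structure of the codomain $Q'$ alone, which is precisely the asymmetry already built into the statement of Theorem \ref{teo41}.
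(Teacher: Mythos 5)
Your proposal is correct and matches the paper's own argument exactly: the paper derives Corollary \ref{cor41} as a direct consequence of Corollary \ref{cor32} (which gives condition \eqref{co1} for the codomain) and Theorem \ref{teo41}. Your observation that only the $2$-divisibility of $Q'$ is actually used is accurate and consistent with the asymmetry in Theorem \ref{teo41}.
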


\begin{cor}
\label{cor42} Every half-isomorphism between automorphic loops of odd order is trivial.
\end{cor}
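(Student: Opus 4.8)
The plan is to derive the statement as an immediate specialization of Theorem \ref{teo41}; all of the substantive work has already been carried out, so the only thing left to check is that the hypothesis of that theorem is automatic under the odd-order assumption.

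First I would record the trivial but essential observation that if $f : Q \to Q'$ is a half-isomorphism between automorphic loops of odd order, then the codomain $Q'$ is itself an automorphic loop of odd order. By Corollary \ref{cor33}, every such loop satisfies condition \eqref{co1}; hence $Q'$ satisfies \eqref{co1}. It is worth keeping in mind the chain that produces Corollary \ref{cor33}: Proposition \ref{prop25} identifies ``odd order'' with ``uniquely $2$-divisible'' for finite automorphic loops, and Corollary \ref{cor32} shows that unique $2$-divisibility forces \eqref{co1}, the latter resting on Theorem \ref{teo31}, which equates $x(xy)=(yx)x$ with $x^2y=yx^2$.

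With the hypothesis of Theorem \ref{teo41} now verified for $Q'$, I would simply invoke that theorem to conclude that $f$ is trivial, i.e.\ either an isomorphism or an anti-isomorphism. This is precisely the assertion, and it resolves in the affirmative the conjecture from \cite{GA192} mentioned in the introduction.

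I do not expect a genuine obstacle at this level, since all the difficulty resides upstream — in Theorem \ref{teo41} (which needs $f$ to be special, to be a semi-homomorphism, and to admit no GG-triple) and in the Section~3 bridge between odd order and \eqref{co1}. The only point meriting care is that Theorem \ref{teo41} places its structural hypothesis on the codomain alone, so odd order of $Q'$ already suffices; the odd order of $Q$ plays no further role beyond making $Q$ one of the automorphic loops to which the theorem applies.
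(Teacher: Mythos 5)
Your proposal is correct and follows exactly the paper's route: the corollary is stated there as a direct consequence of Theorem \ref{teo41} together with Corollary \ref{cor33} (which itself rests on Proposition \ref{prop25} and Corollary \ref{cor32}), precisely the chain you describe. Your added remark that the hypothesis \eqref{co1} need only be checked on the codomain $Q'$ is an accurate reading of Theorem \ref{teo41}.
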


\section{Open problems}

We proved that every half-isomorphism between loops in the class of automorphic loops satisfying \eqref{co1} is special, as was demonstrated earlier for automorphic Moufang loops \cite{GGRS16,KSV}.  
Examples of nontrivial half-isomorphisms between automorphic loops exist (cf. \cite{GA19}), but so far all of them are special. We propose the conjecture that this can be generalized to all automorphic loops:

\begin{conj}
\label{conj1}
Every half-isomorphism between automorphic loops is special.
\end{conj}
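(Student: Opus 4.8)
Since Conjecture \ref{conj1} is precisely Proposition \ref{prop41} with the hypothesis \eqref{co1} on $Q'$ deleted, the natural plan is to re-run the proof of speciality and eliminate its single appeal to \eqref{co1}. By Proposition \ref{prop26}(c) it suffices, for each pair $x,y\in Q$ with $x*y=y*x$, to prove $f(x)\cdot f(y)=f(y)\cdot f(x)$, and the case reductions in Proposition \ref{prop41} and Lemma \ref{lema41} transfer verbatim. Tracing them, every branch closes without \eqref{co1} except one, so the whole conjecture is equivalent to removing \eqref{co1} from that single branch.

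Write $a=f(x)$ and $b=f(y)$ (so $a^{-1}=f(x^{-1})$ by Proposition \ref{prop27}), and recall that products such as $aba^{-1}$ are unambiguous by Proposition \ref{prop22}(c). The only branch using \eqref{co1} is the one in which $aba^{-1}=a^{-1}ba$, where Corollary \ref{cor31} is invoked to force $ab=ba$. By Theorem \ref{teo31} --- which needs no $2$-divisibility --- the condition $aba^{-1}=a^{-1}ba$ is equivalent to $a^2b=ba^2$, so the obstruction is exactly the implication ``$a^2$ commutes with $b$'' $\Rightarrow$ ``$a$ commutes with $b$'', which \eqref{co1} supplies and which may fail in a general automorphic loop. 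The plan is to replace \eqref{co1} here by the extra data carried by $f$. Concretely, set $w=x*y*x^{-1}$; by Lemma \ref{lema31}, identity \eqref{eql31b}, $w^2=(x*y)*(y*x^{-1})$. In this branch $f(w)=aba^{-1}$, $f(x*y)=ab$ and $f(y*x^{-1})=a^{-1}b$, so applying $f$ and Proposition \ref{prop27}, and using \eqref{eql31b} in $Q'$, forces $(aba^{-1})^2=(ab)(ba^{-1})$ to lie in $\{(ab)(a^{-1}b),(a^{-1}b)(ab)\}$. The first possibility collapses at once: cancelling $ab$ on the left gives $ba^{-1}=a^{-1}b$, hence $ab=ba$ by Corollary \ref{cor21}. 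Everything therefore reduces to the single concrete question, posed inside an arbitrary automorphic loop:
\begin{equation*}
a^2b=ba^2 \ \text{ and } \ (ab)(ba^{-1})=(a^{-1}b)(ab) \ \Longrightarrow \ ab=ba.
\end{equation*}
This is the exact analogue of the step that, in the twin branch $f(w)=a^{-1}(ab)$, already succeeds through Corollary \ref{lema32} (there the squaring produces $a^{-1}(ab^2)$, which matches the hypothesis of that corollary); the task is to find a similar identity-level argument here, where the left factor $(ab)(ba^{-1})$ does not present itself in the shape $a^{-1}(ab^2)$ demanded by Corollary \ref{lema32}.

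The main obstacle is precisely closing that implication without \eqref{co1}: the combinatorics of the four-way intersection leaves a configuration compatible with $a^2b=ba^2$, and it is unclear whether the auxiliary relation $(ab)(ba^{-1})=(a^{-1}b)(ab)$ forces $ab=ba$ in every automorphic loop or whether it admits a model with $ab\neq ba$. I would first dispatch the odd-order case, where the obstruction evaporates: if $x$ has odd order then $a=f(x)$ has odd order $n$, and choosing $k$ with $2k\equiv 1\pmod n$ gives $a=(a^2)^k$; since the commutant $C_{Q'}(b)$ is a subloop containing $a^2$, it contains $\langle a^2\rangle\ni a$, whence $ab=ba$ (this also recovers Corollary \ref{cor42} through the present route). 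Thus the entire content of the conjecture is concentrated on elements of even order, and the program splits into two mutually exclusive outcomes: either one proves the displayed implication as a new identity valid in all automorphic loops --- completing the proof --- or one exhibits an automorphic loop violating \eqref{co1} together with a genuine half-isomorphism realizing the bad configuration, which would instead refute the conjecture. I expect the even-order identity to be the decisive difficulty, since it is exactly the content that distinguishes \eqref{co1} from the universally valid Theorem \ref{teo31}.
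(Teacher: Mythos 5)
The statement you set out to prove is Conjecture \ref{conj1}: the paper poses it as an open problem in Section 5 and contains no proof of it, so the only question is whether your argument settles it --- and it does not, as you yourself concede. What you have is a reduction, and the reduction is sound and well-traced: for speciality only Proposition \ref{prop41} and Lemma \ref{lema41} are relevant, and the sole appeal to \eqref{co1} in that chain is indeed Corollary \ref{cor31}, invoked in the branch $f(x*y*x^{-1}) = f(x)\cdot f(y)\cdot f(x)^{-1} = f(x)^{-1}\cdot f(y)\cdot f(x)$ (the other branches close via cancellation plus Corollary \ref{cor21}, and via the squaring argument plus Corollary \ref{lema32}, neither of which uses \eqref{co1}). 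Your observation that this equality already yields $a^2\cdot b = b\cdot a^2$ unconditionally, through the chain of Theorem \ref{teo31}, is correct, and your extra squeeze --- applying \eqref{eql31b} inside $Q'$ to get $f(w)^2 = (a\cdot b)\cdot(b\cdot a^{-1})$, intersecting with $\{(a\cdot b)\cdot(a^{-1}\cdot b),\,(a^{-1}\cdot b)\cdot(a\cdot b)\}$, and killing the first option by left cancellation and Corollary \ref{cor21} --- is a genuine refinement not present in the paper. But the output of all this is the unproved implication
\begin{equation*}
a^2\cdot b = b\cdot a^2 \ \text{ and } \ (a\cdot b)\cdot(b\cdot a^{-1}) = (a^{-1}\cdot b)\cdot(a\cdot b) \ \Longrightarrow \ a\cdot b = b\cdot a,
\end{equation*}
which you explicitly leave as ``the decisive difficulty.'' Since your odd-order dispatch via $a\in\langle a^2\rangle\subseteq C_{Q'}(f(y))$ is fine, this implication for even-order elements is the \emph{entire} remaining content of the conjecture: the gap is not a skipped technical step but the theorem itself.

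Two further cautions on the framing. First, your early claim that ``the whole conjecture is equivalent to removing \eqref{co1} from that single branch'' overstates the situation: the displayed implication is \emph{sufficient} for Conjecture \ref{conj1}, but its failure in some automorphic loop would not refute the conjecture unless the bad configuration were actually realized by a half-isomorphism --- so the two ``mutually exclusive outcomes'' you list at the end are not jointly exhaustive (the implication could be false while the conjecture remains true by some other argument), and you should reconcile this with your own later, more careful phrasing. Second, the standing hypothesis of Section 4 is that $Q'$ satisfies \eqref{co1}, so Lemma \ref{lema41} and Proposition \ref{prop41} may not literally be cited ``verbatim''; their proofs must be re-run without that hypothesis, which your branch-by-branch accounting does justify, but the write-up should say so rather than transfer the statements themselves. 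As submitted, then, this is a reasonable research program and an honest isolation of the obstruction, but not a proof, and Conjecture \ref{conj1} remains exactly as open as the paper left it.
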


\end{document}